\newcommand{\calE}{\mathcal{E}}
\def\R{\ensuremath{\mathbb R} }
\def\C{\ensuremath{\mathbb C} }
\def\N{\ensuremath{\mathbb N} }
\def\Z{\ensuremath{\mathbb Z} }
\def\eps{\ensuremath{\varepsilon} }
\def\vect{\ensuremath{\overset{\rightharpoonup}}}
\newcommand{\GR}{G(\mathcal{R})}
\newcommand* \bb [1]{\left({#1}\right)}
\newcommand* \sbb [1]{\left[{#1}\right]} %square big brackets
\newcommand* \bset [1]{\left\{{#1}\right\}} %square big brackets
\newcommand* \limit [2]{\underset{{#1}\rightarrow{#2}}{\lim}\;}
\newcommand* \bunion[3]{\bigcup_{{#1} = {#2}}^{#3}}
\newcommand* \bintersect[3]{\bigcap_{{#1} = {#2}}^{#3}}
\newcommand* \sumit [3]{\underset{{#1} = {#2}}{\overset{#3}\sum}\;}
\newcommand* \prodit [3]{\underset{{#1} = {#2}}{\overset{#3}\prod}\;}
\newcommand* \abs[1] {\left|{#1}\right|}
\newcommand* \norm[2]{\left|\left|{#1}\right|\right|_{#2}}
\newcommand* \indic [1]{\mathbf 1_{#1}}
\newtheorem{thm}{Theorem}[section]
\newtheorem{lem}[thm]{Lemma}
\newtheorem{prop}[thm]{Proposition}
\newtheorem{cor}[thm]{Corollary}
\newtheorem{defn}[thm]{Definition}
\newtheorem{rmk}[thm]{Remark}
\begin{document}
\title{Measurable entire functions II}

\author{Adi Gl\"ucksam and Benjamin Weiss}

\maketitle

 \begin{abstract}
Let $\mathcal{E}$ denote the space of entire functions with the topology of uniform convergence on compact sets. The action of $\C$ by translations on $\calE$ is defined by $T_zf(w) = f(w+z)$.
Let $\mathcal{U}$ denote the set of entire functions whose orbit under $T$ is dense. Birkhoff showed, in \cite{B}, that $\mathcal{U}$ is not empty.  One of the problems in the collection by T-C Dinh and N. Sibony \cite{DS} asks whether there exists an invariant probability measure on $\mathcal{E}$ whose support is contained in $\mathcal U$. We will show how an old construction of the second author can be modified to provide a positive answer to their question. Furthermore, we modify the construction to produce a wealth of ergodic measures on the space of entire functions of several complex variables.
\end{abstract}

%%%%%%%%%%%%%%%%%%%%%%%%%%%%%%%%%%%%%%%%%%%%%%%%%%%%%%%%%%%%%%%%%%%%%%%%%%%%%%%%%%%%%%%%%%%%%%%%%%%%%%%%%%%%%%%%%%%%%%
%%%%%%%%%%%%%%%%%%%%%%%%%%%%%%%%%%%%%%%%%%%%%%%%%%%%%%%%%%%%%%%%%%%%%%%%%%%%%%%%%%%%%%%%%%%%%%%%%%%%%%%%%%%%%%%%%%%%%%
%%%%%%%%%%%%%%%%%%%%%%%%%%%%%%%%%%%%%%%%%%%%%%%%%%%%%%%%%%%%%%%%%%%%%%%%%%%%%%%%%%%%%%%%%%%%%%%%%%%%%%%%%%%%%%%%%%%%%%
\section*{Introduction}
   The complex plane $\C$ acts on the space of entire functions $\calE$ by translations, for $ f \in \calE$,
   $T_zf(w) = f(w+z)$. With the topology of uniform convergence on compact sets, $\calE$ is a Polish space and the translation action is continuous.
   In \cite{B} Birkhoff constructed an entire function with a dense orbit, showing that this translation action is topologically transitive. This is equivalent
to the fact that there is in fact a dense $G_{\delta}$ set of entire functions whose orbits are dense (see for example \cite{GH} Theorem 9.20).
   Answering a question of George Mackey, in \cite{W}, the second author constructed many invariant ergodic measures on $(\calE, T_z)$. In their recent collection of open problems
   \cite{DS} Dinh and Sibony asked can there be such a measure which is concentrated on functions whose orbit is dense.

      In the following note we will first show how to modify a construction in \cite{W} to achieve this goal.
       The measure $\mu$
      will be constructed using a simple ergodic action of $\C$ by translation on a compact group $G$ via a dense subgroup $H$ which is a copy of $\C$. This action
      preserves the Haar measure $\rho$ and since $H$ is dense the action is ergodic. We will construct a complex valued function $F$ on $G$ such that for a.e. point $g_0 \in G$
      the restriction of $F$ to $Hg_0$ defines an entire function in $\mathcal{U}$, i.e., its orbit under the translation map is dense. The measure $\mu$ is then
      simply the image of Haar measure under this mapping. More formally, the mapping $\hat{F}$ from $g$ to $\calE$ is given by $\hat{F}(g)(z) = F(g+z)$, where we have
      switched to an additive notation and identified $H$ with $\C$. As the image of an ergodic measure we are assured that $\mu$ will ergodic. The special properties
      of the function $F$ that we will construct will guarantee the desired result.

      In the next section we describe the group $G$ and the tools from ergodic theory  that we will need while in the second section the construction of $F$ will be given.
      In the third section we show how to extend this construction to the space of entire functions of several complex variables.

      The final sections are devoted to showing how to go from the translation action on the compact group $G$ (the solenoid) to any free action of $\mathbb{C}^d$ as the sample space for the measures on the space of entire functions.

%%%%%%%%%%%%%%%%%%%%%%%%%%%%%%%%%%%%%%%%%%%%%%%%%%%%%%%%%%%%%%%%%%%%%%%%%%%%%%%%%%%%%%%%%%%%%%%%%%%%%%%%%%%%%%%%%%%%%%
%%%%%%%%%%%%%%%%%%%%%%%%%%%%%%%%%%%%%%%%%%%%%%%%%%%%%%%%%%%%%%%%%%%%%%%%%%%%%%%%%%%%%%%%%%%%%%%%%%%%%%%%%%%%%%%%%%%%%%
%%%%%%%%%%%%%%%%%%%%%%%%%%%%%%%%%%%%%%%%%%%%%%%%%%%%%%%%%%%%%%%%%%%%%%%%%%%%%%%%%%%%%%%%%%%%%%%%%%%%%%%%%%%%%%%%%%%%%%

\section{Solenoids and their towers}
   For the sake of clarity we will first briefly review a simple analogue of the compact group $K$ and the towers related to $H \triangleleft K$ that we will need.
   Denote by $[r]$ the group $\Z / r\Z$ (we will denote the elements of $[r]$ by their representatives $\{0, 1, \cdots r-1 \}$) , and then
   for a sequence of integers $\R = \{ r_1, r_2, \cdots r_n , \cdots \}$ define $R_n = \prod_{i=1}^n r_i$. There is a natural homomorphism from $[R_{n+1}]$ onto $[R_n]$ and the inverse
   limit of this series of groups, $G(\R)$, is called a generalized odometer. It is naturally identified with $ X = \prod_{i=1}^{\infty} [r_i]$ and the element
   $u$ defined by $u(1) =1$ and $u(i) =0$ for all $i >1$,  generates a copy of $\Z$ where the positive elements are those with only a finite number of coordinates different from $0$.
    If all the $r_i = 2$, this is the group of $2$-adic integers. The action defined by translation by $u$, in this case, is called the dyadic odometer.

    The Haar measure $\rho$ on the group $\GR$ is the product of the uniform measures on each $[r_i]$. As is usual in ergodic theory we will denote translation by $u$ by $T$.
    Denote by $(X, \rho)$ this measure space and as is usual in ergodic theory we will denote translation by $u$ by $T$. In the system $(X, \rho, T)$
    define $B_n = \{ x : x(i) = 0, \hspace{0.2cm}\textrm{ for all } 1 \leq i \leq n \}$. It is clear that the sets $T^iB_n$ are disjoint for all $0 \leq i \leq R_n -1$ and their union is all of $X$. We call
    this partition of $X$ a tower over $B_n$.  It
    is also clear that $\rho(B_n) =\frac{1}{ R_n}$, and that the tower over $B_{n+1}$ is divided into $r_{n+1}$ translates of subsets of $B_n$. There is a simple algebraic description of these sets,
   namely
    by its definition as an inverse limit there are homomorphisms $\pi_n: \GR \rightarrow [R_n]$ and the $B_n$'s are the kernels of these mappings.

    In exactly the same manner one can define a continuous analogue of this construction replacing $\Z$ by $\R$. The finite cyclic groups are replaced by circles $\R / R_n \Z$ and the resulting
    inverse limit is called a \textbf{solenoid}.  It contains a copy $H$ of $\R$ as a dense subgroup. Once again we will denote by $\pi_n$ the continuous homomorphisms from the solenoid onto the circles
    $\R / R_n \Z$.

    There is a useful way to view the dynamics of addition by $H \cong \R$ on the solenoid.
    We first mark partitions of the line $\R$ into adjacent intervals $[t + ir_1, t+ (i+1)r_1)$ where $t$ is any point in $[0, r_1)$ and $i \in \Z$.
     Next these intervals are formed into consecutive groups of size $r_2$ ,
    and these are marked by an additional mark. This produces intervals $[t + iR_2, t+ (i+1)R_2)$  of length $R_2$ where  $t$ now ranges over $[0, R_2)$. This process is continued by induction. The elements of
    the solenoid are represented by this hierarchal sequence of partitions and the action by $\R$ is simply translation of this sequence of partitions. What correspond to the bases of the towers $B_n$
    are now those partitions which in stage $n$ have $t=0$. As before this just the kernel of $\pi_n$.  This is a compact subgroup with its own
     Haar measure which when multiplied by the Lebesgue measure
    on $[0, R_n)$ gives the Haar measure on the solenoid.

    Exactly the same construction can be carried out for $\R^2$, using the sequence $R_n\Z^2$.
     The resulting $2$-dimensional solenoid $G$, containing an $H \cong \R^2$, will define the ergodic
    system that we need. The new projections $\pi_n: G \rightarrow \R^2 / R_n\Z^2$ will define the sets $B_n = \textrm{ker}(\pi_n)$ that will be the basis for our construction. By $S_n$ we will denote
    the square $[0, R_n)^2$ which is a fundamental domain for $\R^2 / R_n\Z^2$. Note that $S_{n+1}$ is partitioned into $r_{n+1}^2$ translates of $S_n$ and we will denote
    these as follows:
      $$ S_{n,ij} = [iR_n, (i+1)R_n) \times [jR_n, (j+1)R_n) \hspace{0.1cm}   \textrm{for} \hspace{0.1cm} 0 \leq i , j < r_{n+1}. $$

     For each $n$ the group $G$ can be identified with $ S_n\times B_n$ and $\rho$, Haar measure of $G$,
    with the product of the Haar measure on $B_n$ with Lebesgue measure on $S_n$.
    The group $G$ depends
    of course on the sequence $\{r_n \}$, which can be arbitrary, the only requirement that will be needed is that the sum of the series $\sum_1^{\infty} \frac{1}{r_n}$ is finite.

   \section{The construction}
   The tool from the theory of complex variables that we will need is Runge's theorem which we now recall.

   \begin{thm}(C. Runge, 1985)
   If $K$ is a compact subset of $\C$ with a connected complement and $f$ is holomorphic in some neighborhood of $K$ then given $\eps > 0$ there is a polynomial $p$
   such that
   $$ \sup_{z \in K} |f(z) - p(z)| < \eps.$$
   \end{thm}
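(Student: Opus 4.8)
The plan is to follow the classical three-step route to Runge's theorem: first represent $f$ on $K$ by a Cauchy integral over a cycle lying inside the region of holomorphy; then discretize that integral to obtain rational approximants to $f$ whose poles all lie off $K$; and finally use the connectedness of $\C \setminus K$ to push every pole out to $\infty$, at which point the rational functions become polynomials.

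For the first step I would fix an open set $U \supset K$ on which $f$ is holomorphic, and choose a cycle $\Gamma \subset U \setminus K$ — for instance the suitably oriented boundary edges of those closed squares of a fine square grid that meet $K$ — so that $\Gamma$ has winding number $1$ about every point of $K$ and winding number $0$ about every point of $\C \setminus U$. Cauchy's theorem then gives
\[
  f(z) \;=\; \frac{1}{2\pi i}\int_{\Gamma}\frac{f(\zeta)}{\zeta - z}\,d\zeta, \qquad z \in K .
\]
Because $\Gamma$ is compact and disjoint from $K$, the integrand is uniformly continuous on $\Gamma \times K$, so Riemann sums of this integral are functions of the form $\sum_{k} c_k (\zeta_k - z)^{-1}$ with $\zeta_k \in \Gamma \subset \C \setminus K$ that converge to $f$ uniformly on $K$. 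Thus $f$ lies in the uniform closure on $K$ of the rational functions whose poles avoid $K$.

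The heart of the matter is the pole-pushing lemma: if $a$ and $b$ lie in the same connected component of $\widehat{\mathbb{C}} \setminus K$ (the complement taken in the Riemann sphere, where $b = \infty$ is allowed), then every rational function whose only pole is at $a$ is a uniform limit on $K$ of rational functions whose only pole is at $b$. When $b$ is near $a$ one expands $(\zeta - z)^{-1}$ and its powers in a geometric series about the new centre, which converges uniformly on $K$ since $\operatorname{dist}(a, K) > 0$; the case $b = \infty$ is the same expansion taken for $|b|$ large, and there the approximants are genuine polynomials in $z$. The full statement then follows by a connectedness argument: within the component containing $a$, the set of admissible targets $b$ is nonempty, open, and has open complement, hence is the whole component.

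To conclude, note that since $K$ is compact the set $\C \setminus K$ is unbounded, so $\widehat{\mathbb{C}} \setminus K$ is connected exactly when $\C \setminus K$ is; in particular $\infty$ together with each of the finitely many points $\zeta_k$ appearing above all lie in a single component of $\widehat{\mathbb{C}} \setminus K$. Applying the pole-pushing lemma with target $b = \infty$ to each summand $c_k(\zeta_k - z)^{-1}$ replaces every rational approximant by a polynomial, and a uniform limit on $K$ of polynomials is exactly what is wanted. The steps I expect to be the main obstacle are the construction of the cycle $\Gamma$ with the correct winding numbers and, above all, the open-and-closed argument at the end of the pole-pushing lemma; the remaining estimates are routine.
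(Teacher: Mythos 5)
The paper does not prove this statement at all: Runge's theorem is recalled as a classical tool (the "(C. Runge, 1985)" attribution is a typo for 1885), and the authors immediately move on to \emph{applying} it to reprove Birkhoff's transitivity result. So there is no in-paper argument to compare yours against. That said, your outline is the standard and correct proof: the Cauchy-integral representation over a grid-built cycle $\Gamma \subset U \setminus K$ with the right winding numbers, uniform approximation of the integral by Riemann sums to land in the closure of rational functions with poles off $K$, and the pole-pushing lemma driven by the open-and-relatively-closed argument in a component of $\widehat{\mathbb{C}} \setminus K$, with the final push to $\infty$ (available precisely because $\C \setminus K$ is connected and unbounded) converting the rational approximants into polynomials. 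The only point worth tightening is the convergence condition in the pole-pushing step: the geometric expansion of $(z-a)^{-1}$ about a nearby pole $b$ converges uniformly on $K$ when $|a-b| < \operatorname{dist}(b,K)$, not merely because $\operatorname{dist}(a,K)>0$; this is what makes the set of admissible targets open and its complement open. With that caveat your sketch is complete in outline and fills a gap the paper deliberately leaves to the literature.
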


     To illustrate its strength here is a simple proof of Birkhoff's result. In order to show that the flow $( \calE, T_z) $ is topologically transitive given two open sets
     $U_0, U_1$ in $\calE$ we must find a point $f_0 \in U_0$ and some $z_0 \in \C$ such that $T_{z_0}f_0 \in U_1$. Both  $U_i$ contain  sets $V_i$  defined by polynomials $p_i$, and constants
     $(M, \eps)$ of the form $$V_i = \{f \in \calE : \sup_{|z| \leq R} |f(z) - p_i(z)| < \eps \}.$$

     Now  define a compact set $K$ as the union of the two disks $ D_0 = \{ z: |z| \leq R \}$ and $D_1 = \{ z: |z-3R| \leq R \}$, and in a neighborhood of $K$ define a holomorphic function $f$ as
     $p_0(z)$ on neighborhood of $D_0$ and $p_1(z-3R)$ on a neighborhood of $D_1$. Applying Runge's theorem there is a polynomial $p$ within $\eps$ of this holomorphic function $f$ on $K$. This
     polynomial, which is clearly in $\calE$  is an element of $V_0$ and when translated by $3R$ is in $V_1$ as required.

     \begin{rmk} There is no result as general as Runge's theorem in several complex variables. However there is such a result for the disjoint union of two compact convex sets which suffices to extend
     the above proof to the space of entire functions on $\C^n$. Details can be found for example in \cite{B-G}.
     \end{rmk}

   We fix now a sequence $\{r_n \}$ and let $G$ denote the group that was defined in the previous section. We also fix a sequence of polynomials $\{p_n \}$ that are dense in $\calE$. In each stage
   we will define a complex valued function $F_n$ and these will converge to the desired function. While we defined the sets $S_n$ as subsets of the Euclidean plain we will also
   view them as subsets of the complex plane and freely denote their elements using complex variables. To define $F_1$ on $G$ set $G = S_1\times B_1$ and set $F_1( z , b) = p_1(z)$.

   In stage 2  we first define compact subsets
   $K_{1,ij} \subset S_{1, ij}$ for $0 \leq i, j < r_2$ by
$$K_{1,ij} = \left[iR_1 + \frac{1}{10}, (i+1)R_1 -\frac{1}{10}\right] \times \left[jR_1 + \frac{1}{10}, (j+1)R_1 -\frac{1}{10}\right]. $$
     In the tiling of $S_2$ by  $r_2^2$ copies of $S_1$ these are disjoint squares strictly contained in the original tiles.
   Now view $G$ as $ S_2\times B_2$. Restrict $F_1$ to the union of the $ K_{1,ij}\times B_2$. In addition to this on $ K_{1,00}\times B_2$ replace $F_1$ by $p_2$. Now apply Runge's theorem to this function,
   which is clearly holomorphic on a neighborhood
   of the union of the $  K_{1,ij}\times B_2$,  with $\eps = \frac{1}{10}$ and this defines $F_2$, which is a polynomial, on  $ S_2\times \{ w \}$ for each $ w \in B_2$.

   For the induction we assume that we have defined $F_{n-1}$
   which is a polynomial when restricted to $S_{n-1}\times \{  w \}$ for each $ w \in B_{n-1}$.
   Next  define compact subsets $K_{n-1,ij} \subset S_{n-1, ij}$ as follows:
    $$K_{n-1,ij} = \sbb{iR_{n-1} + \frac{1}{10^{n-1}}, (i+1)R_{n-1} -\frac{1}{10^{n-1}}} \times \sbb{jR_{n-1} + \frac{1}{10^{n-1}}, (j+1)R_{n-1} -\frac{1}{10^{n-1}}}. $$
  Now view $G$ as $S_n\times B_n $. Restrict $F_{n-1}$ to the union of the $ K_{n-1,ij}\times B_n$ and on $ K_{n-1,00}\times B_n$ replace $F_{n-1}$ by $p_n$. Apply now Runge's theorem to approximate
  this new function on the union the $ K_{n-1,ij}\times B_n$ with $\eps = \frac{1}{10^{n-1}}$ to obtain a polynomial $F_n$. This defines $F_n$ on $G$, it is of course a polynomial on
  $S_n\times\bset w$ on each $ w \in B_n$.

Let $D_R:=\bset{\abs z<R}$, then for a.e $g$ for every $n$ large enough $D_Rg:=\underset{ z\in D_R}\bigcup T_zg\subset  \bigcup_{ij} K_{n,ij}B_n$, where the union extends over all $1 \leq i,j \leq r_n -1 $. This shows that for a.e $g$ the functions $F_n^g:D_R\rightarrow \C$ defined by $F_n^g(z)=F_n(T_zg)$, when restricted to $Hg$, converge uniformly on compact subsets of $Hg$ to an entire function.

   The translates of this function will contain portions that are
   within $\sum_{m \geq n-1} \frac{1}{10^m}$ of $p_n$ on large neighborhoods of 0. This shows that for $\rho$-a.e. $g$ the limiting function $F$ restricted to its $H$ orbit is an element of
   $\mathcal{U}$ as required. The required measure $\mu$ on $\mathcal{E}$ is defined as the push forward of $\rho$ by the mapping $F$.  This completes the proof of our main theorem:

   \begin{thm} Let $G$ be a solenoidal group as described in section 1, denote by $\rho$ its Haar measure and by $H$ the dense subgroup which is isomorphic to $\C$. There is a complex
   valued function $F$ on $G$ which when restricted to $\rho$-a.e. coset of $H$ is an entire function with a dense orbit under the translation action of $\C$ on $\mathcal{E}$. The push forward of the measure $\rho$ by the mapping $F$, $F\circ\rho$, is an ergodic measure $\mu$ on $\mathcal{E}$, and $\mu$-a.e $f$ belongs to $\mathcal{U}$, the entire functions with dense orbits.
   \end{thm}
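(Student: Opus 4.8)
The plan is to make precise the two claims that are asserted informally just before the theorem: that for $\rho$-a.e.\ $g$ the functions $F_n^g$ converge on compact sets to an entire function $F^g$, and that this limit has a dense orbit. I would begin by setting up the convergence estimate. Fix $R>0$ and let $D_R=\bset{\abs z<R}$. Because $\rho$ identifies $G$ with $S_n\times B_n$ and Lebesgue measure on the boundary collar $S_n\setminus\bigcup_{1\le i,j\le r_n-1}K_{n,ij}$ has relative measure $O(R_n^{-1})$ inside each tile plus $O(1/r_n)$ from the outermost tiles, the set of $g$ for which $D_Rg\not\subset\bigcup_{i,j}K_{n,ij}B_n$ has $\rho$-measure $O(1/r_n)$, which is summable by the hypothesis $\sum 1/r_n<\infty$. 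Hence by Borel--Cantelli, for $\rho$-a.e.\ $g$ there is $N(g,R)$ such that $D_Rg\subset\bigcup_{i,j}K_{n,ij}B_n$ for all $n\ge N(g,R)$. On this event, for $n\ge N(g,R)$ the function $F_{n+1}$ was obtained from $F_n$ by a Runge approximation of error $\le 10^{-n}$ on the set $\bigcup_{i,j}K_{n,ij}B_n$, which contains $D_Rg$; therefore $\sup_{z\in D_R}\abs{F_{n+1}(T_zg)-F_n(T_zg)}\le 10^{-n}$, so $F_n^g$ is uniformly Cauchy on $D_R$ and converges to a holomorphic $F^g$ on $D_R$. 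Letting $R\to\infty$ along integers and intersecting the countably many full-measure events gives a single full-measure set on which $F^g:=\lim_n F_n^g$ is entire; define $F(g)=F^g(0)$ and $\hat F(g)(z)=F(T_zg)=F^g(z)$.

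Next I would verify density of the orbit. Fix one of the dense polynomials $p_k$ and a tolerance $\eps>0$ and a radius $R$; I must produce $z_0\in\C$ with $\sup_{\abs z\le R}\abs{F^g(z+z_0)-p_k(z)}<\eps$. Choose $n\ge k$ large enough that $D_Rg\subset\bigcup_{i,j}K_{n,ij}B_n$, that $\sum_{m\ge n}10^{-m}<\eps/2$, that $R_n>2R+1$, and that $p_k$ appears as the $k$-th polynomial used at some stage $\le n$ — more precisely, at stage $k$ we replaced the function on $K_{k-1,00}B_k$ by $p_k$, and each subsequent stage only perturbs by the Runge errors and preserves the restriction to the sub-tiles $K_{m,ij}B_{m+1}$. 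Write $g=(s,b)\in S_n\times B_n$. Since $D_Rg$ lands inside some tile $K_{n,ij}B_n$, I translate by the vector $z_0$ carrying the center of that tile to the center of the tile $S_{k-1,00}$ (viewed inside $S_n$ via the nested tiling), using that $R_n/R_k$ is an integer so this translation is by an element of $H$ and respects the tower structure; on the resulting disk of radius $R$ around the center of $K_{k-1,00}$, the functions $F_m$ for $k\le m\le n$ all agree with $p_k$ up to the accumulated Runge error $\sum_{k\le m<n}10^{-m}$, and then $F^g$ agrees with $F_n$ up to $\sum_{m\ge n}10^{-m}$; both together are $<\eps$. Hence $\hat F(g)$ has dense orbit, i.e.\ $\hat F(g)\in\mathcal U$, for $\rho$-a.e.\ $g$.

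Finally, ergodicity: the translation action of $H\cong\C$ on the solenoid $G$ is ergodic for $\rho$ because $H$ is dense (any invariant set pulls back to an $H$-invariant, hence translation-invariant set on the group, which by density of $H$ and continuity is trivial). The map $\hat F:G\to\calE$ intertwines this action with the translation action $T_z$ on $\calE$: $\hat F(T_wg)(z)=F(T_{z+w}g)=\hat F(g)(z+w)=(T_w\hat F(g))(z)$. One checks $\hat F$ is measurable — it is a pointwise a.e.\ limit of the continuous maps $g\mapsto F_n^g$ with values in the Polish space $\calE$. Therefore $\mu:=\hat F_*\rho$ is a $T_z$-invariant Borel probability measure on $\calE$, and as the push-forward of an ergodic measure under an equivariant measurable map it is ergodic. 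Since $\mathcal U$ has full $\rho$-preimage under $\hat F$ (and $\mathcal U$ is a Borel — indeed $G_\delta$ — subset of $\calE$), $\mu(\mathcal U)=1$, completing the proof.

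I expect the main obstacle to be the density-of-orbit bookkeeping in the second paragraph: one must track carefully that the replacement by $p_k$ at stage $k$ survives, \emph{undistorted except by the geometric Runge errors}, on a full sub-tile at every later stage, and that the tile into which $D_Rg$ falls can be mapped onto the distinguished sub-tile $K_{k-1,00}$ by a genuine element of $H$ compatible with the nested tiling — this is where the integrality of the ratios $R_n/R_k$ and the shrinking margins $10^{-n}$ (small compared to the fixed radius $R$, once $n$ is large) are used.
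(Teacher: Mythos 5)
Your overall strategy is exactly the paper's: Borel--Cantelli on the boundary collars to get a.e.\ locally uniform convergence of $F_n^g$, equivariance of $\hat F$ together with ergodicity of the dense-subgroup translation action to get ergodicity of the push-forward, and the placement of $p_k$ on the distinguished tile $K_{k-1,00}$ at stage $k$ to get density of orbits. The convergence and ergodicity paragraphs are essentially fine; one small point is that $F_{n+1}$ approximates $F_n$ to within $10^{-n}$ only \emph{off} the distinguished tile $K_{n,00}B_{n+1}$, where it instead approximates $p_{n+1}$, so you need one more application of Borel--Cantelli (the relative measure of $K_{n,00}B_{n+1}$ is about $r_{n+1}^{-2}$, which is summable) to ensure $D_Rg$ eventually avoids these tiles; this is why the paper's union runs over $1\le i,j\le r_n-1$ rather than over all $i,j$.

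The genuine gap is a quantifier slip in the density argument. With $k$ \emph{fixed}, the construction places $p_k$ only once, at stage $k$, on a tile of side $R_{k-1}$, and the accumulated error there is $\sum_{m\ge k-1}10^{-m}>10^{-k}$, which cannot be made smaller by taking $n$ large. So your chain of estimates yields $\sup_{|z|\le R}\abs{F^g(z+z_0)-p_k(z)}\le\sum_{m\ge k-1}10^{-m}$, and only for $R$ at most about $R_{k-1}/2$ (the disk of radius $R$ about the centre of $K_{k-1,00}$ is not contained in $K_{k-1,00}$ once $R>R_{k-1}/2$; your condition $R_n>2R+1$ controls the wrong index). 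In particular the claim that ``both together are $<\eps$'' fails whenever $\eps<10^{-k}$. The missing step is to reduce to \emph{large} indices: given a target $h\in\calE$, a radius $R$ and $\eps>0$, the open set $V=\bset{f:\sup_{|z|\le R}\abs{f-h}<\eps/2}$ contains $p_{k}$ for arbitrarily large $k$ (a dense sequence in the perfect Polish space $\calE$ meets every nonempty open set in infinitely many terms), so one may choose such a $k$ with, in addition, $R_{k-1}>2R+2$ and $\sum_{m\ge k-1}10^{-m}<\eps/2$; your translation-to-$K_{k-1,00}$ bookkeeping then closes the argument. With that modification the proof is complete and coincides with the paper's.
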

%%%%%%%%%%%%%%%%%%%%%%%%%%%%%%%%%%%%%%%%%%%%%%%%%%%%%%%%%%%%%%%%%%%%%%%%%%%%%%%%%%%%%%%%%%%%%%%%%%%%%%%%%%%%%%%%%%%%%%
%%%%%%%%%%%%%%%%%%%%%%%%%%%%%%%%%%%%%%%%%%%%%%%%%%%%%%%%%%%%%%%%%%%%%%%%%%%%%%%%%%%%%%%%%%%%%%%%%%%%%%%%%%%%%%%%%%%%%%
%%%%%%%%%%%%%%%%%%%%%%%%%%%%%%%%%%%%%%%%%%%%%%%%%%%%%%%%%%%%%%%%%%%%%%%%%%%%%%%%%%%%%%%%%%%%%%%%%%%%%%%%%%%%%%%%%%%%%%
\section{Several complex variables}

   There is no general analogue of Runge's theorem in several complex variables. Nonetheless, somewhat surprisingly, there is such an approximation theorem
   for the generalizations to higher dimensions of the compact sets that were involved in the previous construction. Since this is not so well known we will sketch
   a proof of the result that we need. In this and the following sections we will denote elements of $\C^d$ by $\vect z$, i.e. $\vect z = (z_1, z_2, \cdots, z_d) $.   We need the following definition:

   \begin{defn}The  \textbf{polynomially convex hull} $\hat{K}$ of a compact subset $K \subset \C^d$ is defined by:
   $$ \hat{K} = \{ \vect w : |p(\vect w)| \leq \sup_{\vect z \in K}|p(\vect z)| \hspace{0.1 cm}  \textrm{for all polynomials} \hspace{0.1cm}  p \} .$$
   The compact set $K$ is called \textbf{polynomially convex} if $\hat{K} = K$.
   \end{defn}

    In one dimension the condition for a compact set to be polynomially convex is that its complement is connected. In higher dimensions there is no simple
    criterion. There is an approximation theorem for polynomially convex sets in higher dimensions
    called the \textbf{Oka-Weil Approximation Theorem}. It asserts that if a compact set $K \subset \C^d$  is polynomially convex then any complex valued function $f$
    that is holomorphic in a neighborhood of $K$ can be approximated uniformly
   on $K$ by polynomials (see \cite{G} III.5). The simplest polynomially convex sets are the convex sets.  Eva Kallin \cite{K} found a simple condition for
   the union of two disjoint polynomially convex sets to be polynomially convex. Here is her result:

   \begin{lem}\label{lem:Kallin}(E. Kallin 1964) Let $K_1$ and $K_2$ be compact subsets in $\C^d$ and assume  that there exists
   a polynomial $p$ satisfying $\widehat{ p(K_1)} \cap \widehat{ p(K_2) } = \emptyset$, then
   $\widehat{ K_1 \cup K_2 } = \hat{K_1} \cup \hat{K_2} $.
   \end{lem}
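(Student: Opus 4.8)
The plan is to show the two inclusions $\widehat{K_1 \cup K_2} \supseteq \widehat{K_1} \cup \widehat{K_2}$ and $\widehat{K_1 \cup K_2} \subseteq \widehat{K_1} \cup \widehat{K_2}$ separately. The first inclusion is immediate from the definition of the polynomially convex hull: if $\vect w \in \widehat{K_j}$ then $|q(\vect w)| \le \sup_{K_j}|q| \le \sup_{K_1 \cup K_2}|q|$ for every polynomial $q$, so $\vect w \in \widehat{K_1 \cup K_2}$. The reverse inclusion is the substantive part, and here the hypothesis on the separating polynomial $p$ will do the work.

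For the hard inclusion, fix $\vect w \in \widehat{K_1 \cup K_2}$. First I would observe that $p(\vect w) \in \widehat{p(K_1 \cup K_2)} = \widehat{p(K_1) \cup p(K_2)}$ in $\C$, because if $r$ is any polynomial in one variable then $r \circ p$ is a polynomial on $\C^d$, so $|r(p(\vect w))| \le \sup_{K_1 \cup K_2}|r \circ p| = \sup_{p(K_1) \cup p(K_2)}|r|$. Now in one variable the polynomially convex hull of a compact set is its union with the bounded complementary components, so $\widehat{p(K_1) \cup p(K_2)} = \widehat{p(K_1)} \cup \widehat{p(K_2)}$ precisely when these two hulls are disjoint (the bounded complementary components of the union cannot straddle the gap); this is exactly the hypothesis $\widehat{p(K_1)} \cap \widehat{p(K_2)} = \emptyset$. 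Hence $p(\vect w)$ lies in exactly one of $\widehat{p(K_1)}$, $\widehat{p(K_2)}$; say $p(\vect w) \in \widehat{p(K_1)}$, and in particular $p(\vect w) \notin \widehat{p(K_2)}$.

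It remains to upgrade $\vect w \in \widehat{K_1 \cup K_2}$ together with $p(\vect w) \notin \widehat{p(K_2)}$ to the conclusion $\vect w \in \widehat{K_1}$. The key device is a separating function on the value side: since $p(\vect w)$ and $\widehat{p(K_2)}$ are disjoint compact sets in $\C$ and the latter is polynomially convex, Runge's theorem provides one-variable polynomials $h_k$ with $h_k \to 0$ uniformly on a neighborhood of $\widehat{p(K_2)}$ and $h_k(p(\vect w)) \to 1$. Then for any polynomial $q$ on $\C^d$ and any $k$, the polynomial $q \cdot (h_k \circ p)^m$ satisfies, at $\vect w$,
$$|q(\vect w)|\,|h_k(p(\vect w))|^m \le \sup_{K_1 \cup K_2} |q|\,|h_k \circ p|^m = \max\Bigl(\sup_{K_1}|q|\,|h_k\circ p|^m,\ \sup_{K_2}|q|\,|h_k \circ p|^m\Bigr).$$
On $K_2$ the factor $|h_k \circ p|^m$ is small (bounded by $\sup_{p(K_2)}|h_k|$ to the $m$-th power), so letting $m \to \infty$ kills the $K_2$ term, and then dividing by $|h_k(p(\vect w))|^m$ and letting $k \to \infty$ yields $|q(\vect w)| \le \sup_{K_1}|q|$. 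Since $q$ was arbitrary, $\vect w \in \widehat{K_1}$. The symmetric case gives $\vect w \in \widehat{K_2}$, completing the proof.

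The main obstacle is the last step: arranging the limits in $m$ and $k$ in the right order so that the $K_2$ contribution is annihilated before one divides by the (small) quantity $|h_k(p(\vect w))|^m$. The cleanest way to manage this is to first choose, for fixed $k$, the exponent $m = m(k)$ large enough that $\sup_{K_2}|q|\cdot(\sup_{p(K_2)}|h_k|)^{m} < |h_k(p(\vect w))|^{m}$, which is possible as soon as $\sup_{p(K_2)}|h_k| < |h_k(p(\vect w))|$ — and this strict inequality is exactly what the Runge approximation $h_k \to 0$ on $\widehat{p(K_2)}$, $h_k(p(\vect w)) \to 1$ buys us for $k$ large. One should also note that $q$ may need to be pre-scaled or that one works with a fixed $q$ throughout; since the final inequality $|q(\vect w)| \le \sup_{K_1}|q|$ is what is wanted for each individual $q$, no uniformity in $q$ is required, which simplifies the bookkeeping.
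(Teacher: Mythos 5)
The paper does not actually prove this lemma --- it is quoted from Kallin's 1964 paper with a citation --- so your argument has to stand on its own. Your overall strategy (show $\vect w\in\widehat{K_1\cup K_2}$ with $p(\vect w)\notin\widehat{p(K_2)}$ forces $\vect w\in\hat{K_1}$, by testing against polynomials $q\cdot(h\circ p)^m$) is the right one, and the first two steps are fine. But the last step has a genuine gap: your Runge polynomials $h_k$ are controlled only on $\widehat{p(K_2)}$ (where they tend to $0$) and at the single point $p(\vect w)$ (where they tend to $1$); nothing constrains them on $p(K_1)$. After you ``kill the $K_2$ term'', the surviving inequality is
$$
|q(\vect w)|\le\frac{\sup_{K_1}\abs{q\cdot(h_k\circ p)^m}}{\abs{h_k(p(\vect w))}^m}\le\sup_{K_1}|q|\cdot\Bigl(\frac{\sup_{p(K_1)}|h_k|}{|h_k(p(\vect w))|}\Bigr)^{m},
$$
and $\sup_{p(K_1)}|h_k|$ may be enormous, so the right-hand side does not collapse to $\sup_{K_1}|q|$. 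Your closing paragraph worries about the order of limits between the $K_2$ term and the division, but the real danger is the uncontrolled $K_1$ term.

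The repair is to approximate the right function. Since $\widehat{p(K_1)}$ and $\widehat{p(K_2)}$ are disjoint compact subsets of $\C$, each with connected complement, their union has connected complement (the same topological fact you invoke in your second step), so Runge applies to the locally constant function equal to $1$ near $\widehat{p(K_1)}$ and $0$ near $\widehat{p(K_2)}$, giving $h$ with $|h-1|\le\eps$ on $\widehat{p(K_1)}$ and $|h|\le\eps$ on $\widehat{p(K_2)}$. Here your observation that $p(\vect w)$ actually lies in $\widehat{p(K_1)}$ --- not merely outside $\widehat{p(K_2)}$ --- becomes essential rather than decorative, since it yields $|h(p(\vect w))|\ge 1-\eps$ for free. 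Then already $m=1$ suffices: $|q(\vect w)|(1-\eps)\le\max\bigl(\sup_{K_1}|q|\,(1+\eps),\ \sup_{K_2}|q|\,\eps\bigr)$, and letting $\eps\to 0$ gives $|q(\vect w)|\le\sup_{K_1}|q|$. Note that large powers $m$ cannot be combined with a fixed $\eps$, because $\bigl((1+\eps)/(1-\eps)\bigr)^m$ blows up; the limit must be taken in $\eps$, not in $m$.
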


   In particular if  $K_1, K_2$ are  convex  their union is polynomially convex since they can be separated
   by a one degree polynomial.
    In order to simplify the notation we will now set $d=2$, the general case needs no new ideas. Here is the proposition
   that we will need.
   \begin{prop}\label{products}
   Suppose we have two collections of pairwise disjoint compact subsets in $\C$, $\{ K_1, K_2, \cdots K_a \}$ and $\{ L_1, L_2, \cdots L_b \}$ such
    that the complements $\C \backslash \bigcup_{i=1}^aK_i$ and $\C \backslash \bigcup_{j=1}^b L_j$ are connected. Assume in addition that each product $K_i \times L_j$
    is polynomially convex. Then the union
    $$\bigcup_{i=1,j=1}^{a,b}K_i \times L_j$$ is polynomially convex.
   \end{prop}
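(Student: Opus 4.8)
The plan is to apply Kallin's lemma (Lemma~\ref{lem:Kallin}) repeatedly, separating the pieces first in the second coordinate and then in the first. The key preliminary observation is this: since $\C\setminus\bigcup_{j=1}^b L_j$ is connected, Runge's theorem produces a one–variable polynomial $q$ with $|q-j|<\tfrac14$ on $L_j$ for every $j$ — apply Runge to the locally constant function equal to $j$ on a sufficiently small (pairwise disjoint) neighborhood of $L_j$. Hence $q(L_j)$ is contained in the open disk $D_j$ of radius $\tfrac14$ centered at $j$, and the closed disks $\overline{D_1},\dots,\overline{D_b}$ are pairwise disjoint. Because removing finitely many disjoint closed disks from $\C$ leaves a connected unbounded set, the polynomially convex hull in $\C$ of any partial union $\bigcup_{j\in I}q(L_j)$ is again contained in $\bigcup_{j\in I}\overline{D_j}$; in particular such hulls attached to disjoint index sets are disjoint. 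Applying the same construction to the family $\{K_i\}$, using that $\C\setminus\bigcup_{i=1}^aK_i$ is connected, yields a polynomial $\tilde q$ with the analogous properties.

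First I would fix $i$ and prove by induction on $m\le b$ that $\bigcup_{j=1}^m K_i\times L_j$ is polynomially convex. The case $m=1$ is one of the hypotheses. For the inductive step write $B=\bigcup_{j=1}^{m-1}K_i\times L_j$, which is polynomially convex by the inductive hypothesis, and $C=K_i\times L_m$, which is polynomially convex by hypothesis, and consider the polynomial $p(z_1,z_2)=q(z_2)$ on $\C^2$. Then $p(B)=\bigcup_{j=1}^{m-1}q(L_j)$ and $p(C)=q(L_m)$, so by the preliminary observation $\widehat{p(B)}\cap\widehat{p(C)}=\emptyset$; Lemma~\ref{lem:Kallin} then gives $\widehat{B\cup C}=\hat B\cup\hat C=B\cup C$. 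Taking $m=b$ shows that $M_i:=\bigcup_{j=1}^b K_i\times L_j=K_i\times\bigl(\bigcup_{j=1}^b L_j\bigr)$ is polynomially convex for each $i$.

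Next I would run the identical induction in the other coordinate. Using the polynomial $p(z_1,z_2)=\tilde q(z_1)$, which takes the single value set $\tilde q(K_i)$ on all of $M_i$, one proves by induction on $m\le a$ that $\bigcup_{i=1}^m M_i$ is polynomially convex: at each step the separation hypothesis of Lemma~\ref{lem:Kallin} is supplied exactly as before by the disjointness of the closed disks containing the sets $\tilde q(K_i)$, and the conclusion $\widehat{B\cup C}=\hat B\cup \hat C$ with $B,C$ already polynomially convex keeps the induction going. Taking $m=a$ gives that $\bigcup_{i=1}^a M_i=\bigcup_{i=1,j=1}^{a,b}K_i\times L_j$ is polynomially convex, which is the assertion.

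I expect the only delicate point to be the bookkeeping in the first paragraph: a single application of Kallin's lemma separates only two compact sets at a time, so at every step of the induction the already–assembled union $B$ must still be cleanly separated from the next block $C$, and this works precisely because taking polynomially convex hulls in $\C$ does not push the images $q(L_j)$ (respectively $\tilde q(K_i)$) outside the fixed family of pairwise disjoint closed disks. Everything else is a direct invocation of Runge's theorem and Lemma~\ref{lem:Kallin}; note that both connectedness hypotheses are genuinely used, one for each of the two inductions.
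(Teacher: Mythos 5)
Your proof is correct and follows essentially the same strategy as the paper's: build a one-variable polynomial via Runge's theorem, lift it to $\C^2$ as a function of a single coordinate, and iterate Kallin's lemma first along one coordinate and then along the other. The only (harmless) difference is that you construct a single polynomial taking values near the distinct integers $1,\dots,b$ on the $L_j$ once and for all, whereas the paper re-applies Runge at each inductive step to separate the accumulated union from the next block with values near $0$ and $1$.
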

   \begin{proof} The proof is by induction. Let $b=1$ and set $\mathcal K_u = \bigcup_{i=1}^u K_i$.
   Suppose that for $u < a$  we have already shown that $\mathcal K_u  \times L_1$ is polynomially convex. By the hypothesis this holds for $u=1$, the base of the induction. By
   Runge's theorem we can find a polynomial $p$ of one variable such that $sup_{z \ {\mathcal K}_u} |p(z)| \leq 0.1 $ and $sup_{z \ K_{u+1}} |p(z)-1| \leq 0.1 $. We can view $p$ as a polynomial \textbf{p} on $\C^2$
   by setting $ \textbf{p }(\vect z) = p(z_1)$ and then apply Kallin's lemma, Lemma \ref{lem:Kallin}, to the sets $\mathcal K_u \times L_1$ and $K_{u+1} \times L_1$ to conclude that the set
   $$
   \mathcal K_u\times L_1 \cup K_{u+1} \times L_1={\mathcal K}_{u+1}\times L_1
   $$
   is also polynomially convex.
   By induction we  then conclude that $\mathcal K_a \times L_1$ is polynomially convex. Now the same argument shows that each of the sets $\mathcal K_a \times L_j$ for $1 \leq j \leq b$ is polynomially
   convex. Repeating the inductive argument
   using  the second coordinate will conclude the proof. \qedsymbol
   \end{proof}

     With this proposition a construction analogous to the one in the previous section can be carried out in several complex variables. In general, we would identify $\C^d$ with $\R^{2d}$ and
construct the $2d$-dimensional solenoid as an inverse limit of $\R^{2d} / R_n\Z^{2d}$ with the sequence $R_n$ as before. It is in fact $G^d$ where $G$ is the 2-dimensional solenoid found above.
   With our previous notation $B_n^d$ is now a compact subgroup and $G^d$ can be identified with $B_n^d \times S_n^d$. In the inductive step of the construction the use of Runge's theorem
is now replaced by Oka-Weil Approximation Theorem. The use of this theorem is possible since the compact sets where we want to approximate $F_{n-1}$ are products of compact sets in one complex variable of the form $\bigcup_{j,i} K_j\times L_i$, and therefore, following Proposition products, the union is polynomially convex. We will leave the details to the reader and simply formulate the result. Denote by $\mathcal{E}(d)$ the entire functions on $\C^d$ and by $\mathcal{U}(d)$ those functions with a
   dense orbit under the translation action of $\C^d$ on $\mathcal{E}(d)$.

   \begin{thm} Let $G$ be a solenoidal group as described in section 1. The Haar measure on $G^d$ is $\rho^d$ and  $H^d$ is the dense subgroup which is isomorphic to $\C^d$. There is a complex
   valued function $F$ on $G^d$ which when restricted to $\rho^d$-a.e. coset of $H^d$ is an entire function on $\C^d$ with a dense orbit under the translation action of $\C$ on $\mathcal{E}$. The push forward of the measure $\rho^d$ by the mapping $F$,
   $F\circ\rho^d$, is an ergodic measure on $\mathcal{E}(d)$ with support in $\mathcal{U}(d)$.
   \end{thm}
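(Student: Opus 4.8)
The plan is to mirror the one–variable construction of Section 2 verbatim, with the only change being the replacement of Runge's theorem by the Oka--Weil theorem, whose applicability is guaranteed by Proposition \ref{products}. First I would set up the $2d$-dimensional solenoid $G^d$ as the inverse limit of $\R^{2d}/R_n\Z^{2d}$, identify it with $B_n^d \times S_n^d$ for each $n$, and fix a sequence of polynomials $\{p_n\}$ dense in $\calE(d)$ (such a sequence exists since polynomials in several variables with, say, Gaussian-integer coefficients are dense in $\calE(d)$ in the topology of uniform convergence on compact sets, and this set is countable). The sets $S_{n,\vect i}$ and the shrunken compact cores $K_{n-1,\vect i} \subset S_{n-1,\vect i}$ are defined as before, now as products of $2d$ closed intervals, each obtained by chopping $1/10^{n-1}$ off both ends of each coordinate; crucially each such $K_{n-1,\vect i}$ is a product of $d$ compact rectangles in $\C$ (pairing coordinates $2k-1, 2k$), hence a product $K \times L \times \cdots$ of convex planar sets, so each individual tile is polynomially convex.

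Next I would run the induction. Having constructed $F_{n-1}$, a polynomial on each slice $S_{n-1}^d \times \{w\}$, I view $G^d$ as $S_n^d \times B_n^d$, restrict $F_{n-1}$ to $\bigcup_{\vect i} K_{n-1,\vect i} \times B_n^d$, and overwrite it by $p_n$ on the tile indexed by $\vect i = \vect 0$. The key point is that the set $\bigcup_{\vect i} K_{n-1,\vect i}$ is of the form $\bigcup_{j,i} K_j \times L_i$ in the sense of Proposition \ref{products}: the planar pieces obtained from the first pair of real coordinates form a finite family of disjoint rectangles whose union has connected complement, likewise for each further pair, and every full product $K_{j_1}\times \cdots \times K_{j_d}$ is polynomially convex because it is convex. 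An iterated application of Proposition \ref{products} (once for $d=2$, then inductively absorbing one planar factor at a time for general $d$, exactly as the proposition's proof does) yields that $\bigcup_{\vect i} K_{n-1,\vect i}$ is polynomially convex. The modified function is holomorphic on a neighborhood of this set, so Oka--Weil supplies a polynomial $F_n$ agreeing with it to within $\eps = 1/10^{n-1}$; this $F_n$ is again a polynomial on each slice $S_n^d \times \{w\}$, closing the induction.

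Then I would verify convergence and density exactly as in Section 2. For $\rho^d$-a.e.\ $g$, and every $n$ large enough, the polydisc translate $D_R^d g := \bigcup_{\vect z \in D_R^d} T_{\vect z} g$ lies inside $\bigcup_{\vect i} K_{n,\vect i} B_n^d$ (the union over $1 \leq i_k \leq r_n-1$ in each coordinate): this is a Borel--Cantelli argument, since the complement of that union inside $G^d$ has measure $O(\sum_k 1/r_k)$-type tails that are summable once we use $\sum 1/r_n < \infty$ and shift the base point by a fundamental domain, so for a.e.\ $g$ only finitely many $n$ fail. On this full-measure set the restrictions $F_n^g(\vect z) = F_n(T_{\vect z} g)$ form a uniformly Cauchy sequence on each polydisc (successive differences are bounded by $1/10^{n-1}$ on $D_R^d$), hence converge to an entire function $F^g$ on $\C^d$. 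The density of the orbit follows because for each $n$ there is a translate of $F^g$ that is within $\sum_{m \geq n-1} 1/10^m$ of $p_n$ on a large polydisc around the origin, and the $p_n$ are dense in $\calE(d)$; thus $F^g \in \mathcal U(d)$ almost surely. Finally, the pushforward $\mu = F \circ \rho^d$ is a translation-invariant measure on $\calE(d)$ because $\hat F$ intertwines the $H^d \cong \C^d$-action on $G^d$ with the translation action on $\calE(d)$, it is ergodic because $\rho^d$ is ergodic under the dense subgroup $H^d$ (dense one-parameter — here $2d$-parameter — subgroup of a compact group acts ergodically with respect to Haar measure), and its support is contained in $\mathcal U(d)$ by the previous sentence.

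The main obstacle is the polynomial-convexity bookkeeping in the inductive step: one must be careful that after overwriting the central tile by $p_n$ the relevant compact set is still exactly a finite union of products of planar convex sets with the connected-complement hypothesis of Proposition \ref{products} met in each coordinate, and that the iterated use of the proposition for $d > 2$ is legitimate — this is routine but is the only place where several-variables phenomena could bite. Everything else (the Borel--Cantelli estimate, the telescoping uniform bound, ergodicity of the solenoid action) is identical to the one-variable case and I would merely refer to Section 2.
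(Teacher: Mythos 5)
Your proposal is correct and follows exactly the route the paper takes: the paper itself only sketches this theorem, stating that one repeats the Section 2 construction on $G^d \cong B_n^d\times S_n^d$ with Runge's theorem replaced by the Oka--Weil theorem, the polynomial convexity of the union of tiles being supplied by Proposition \ref{products}, and "leaves the details to the reader." Your write-up supplies precisely those details (the product structure of the tiles, the iterated use of Proposition \ref{products} for $d>2$, the Borel--Cantelli and telescoping estimates, and the equivariance giving ergodicity of the pushforward), all consistent with the paper's intended argument.
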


  % In \cite{W} it was shown that one can begin with an arbitrary free ergodic action of $\C$  %on a measure space $\XBT$ and define a complex valued function $F$ on $X$ such restricting
   %$F$ to a.e. $x \in X$ the function $F(T_z(x))$ is entire. That general construction can be %modified to show that not only is $F(T_z(x)$ entire it belongs to $\mathcal{U}$.
   %On the other hand the construction I described for several complex variables used in an %essential way the nature of the solenoidal group. It remains an open problem as to whether
   %or not one can define functions $F$ on arbitrary free ergodic actions of $\C^2$ that %restrict to entire functions on a.e. orbit.

%%%%%%%%%%%%%%%%%%%%%%%%%%%%%%%%%%%%%%%%%%%%%%%%%%%%%%%%%%%%%%%%%%%%%%%%%%%%%%%%%%%%%%%%%%%%%%%%%%%%%%%%%%%%%%%%%%%%%%
%%%%%%%%%%%%%%%%%%%%%%%%%%%%%%%%%%%%%%%%%%%%%%%%%%%%%%%%%%%%%%%%%%%%%%%%%%%%%%%%%%%%%%%%%%%%%%%%%%%%%%%%%%%%%%%%%%%%%%
%%%%%%%%%%%%%%%%%%%%%%%%%%%%%%%%%%%%%%%%%%%%%%%%%%%%%%%%%%%%%%%%%%%%%%%%%%%%%%%%%%%%%%%%%%%%%%%%%%%%%%%%%%%%%%%%%%%%%%
\section{Almost Polynomially Convex Sets}
To extend the proof for general free actions of $\C^d$, we will need to show that a finite union of cubes, which are no longer necessarily of the form $\bigcup_{i,j} K_j\times L_i$, is also polynomially convex if one is willing to remove small parts of the set, i.e., it is almost polynomially convex. The goal of this section is to show that any collection of unit cubes forms an almost polynomially convex set, see Proposition \ref{prop:poly_conv}. The ideas of the proposition and its proof are reminiscent of  \cite[Lemma 2.1]{AG2019}. To state the proposition rigorously, and prove it and the supporting lemmas we begin with explaining the notation and definitions we will use in this section.
\subsection{Definitions, Notation, and the Main Lemma}
From now on $q$ will denote some vector in $\C^{d}\cong \R^{2d}$ and $Q_0:=\sbb{-\frac12,\frac12}^{2d}$. We will say a cube $Q$ is {\bf parallel to the axes} if the facets of $Q$ are parallel to the hyperplanes $\bset{x_k=0}$. In particular, any cube of the form $Q=q+Q_0$ is a unit cube parallel to the axes, centred at $q$.

Let $Q=q+Q_0$ be a unit cube parallel to the axes. Partitioning every segment defining this cube into two segments of equal length, we create the collection $\bset{Q^{\vect i}}_{\vect i\in\bset{-1,0}^{2d}}$ defined by
$$
Q^{\vect i}=q+\prodit j 1 {2d} \sbb{\frac{i_j}2,\frac{i_j+1}2},
$$
where, for every $j$
$$
\sbb{\frac{i_j}2,\frac{i_j+1}2}=	\begin{cases}
							\sbb{-\frac{1}2,0}&, i_j=(-1)\\
							\sbb{0,\frac{1}2}&, i_j=0
						\end{cases}.
$$
We will call every sub-cube, $Q^{\vect i}$, a {\bf $\frac1{2^{2d}}$ sub-cube of $Q$}.

The collection of all unit cubes centered on $\Z^{2d}$ is called {\bf the grid}, and every cube in the grid is of the form, $Q=q+Q_0, q\in\Z^{2d}$, and is called a {\bf grid-cube}.

\begin{figure}[ht]
	\centering
	\includegraphics[scale=0.85]{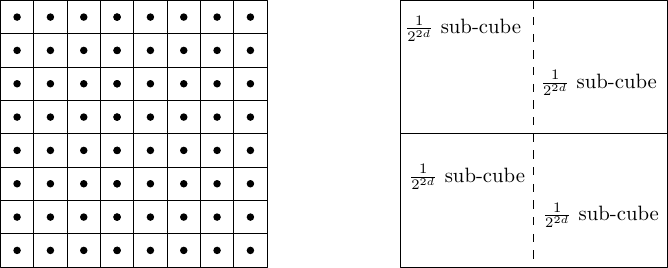}
	\caption{The figure to the left demonstrates grid cubes, the dots are the lattice $\Z^{2d}$. The figure to the right shows all the $\frac1{2^{2d}}$ sub-cubes of a cube in $\C\cong\R^2$.}
	\label{fig:partition}
\end{figure}

Given $q\in\R^{2d}$, we denote by $\sbb q\in\Z^{2d}$ the point from $\Z^{2d}$ closest to $q$, i.e.,
$$
\sbb {q}_j=\begin{cases}
				\lceil q_j\rceil&, q_j-\lfloor q\rfloor>\frac12\\
				\lfloor q_j\rfloor&,q_j-\lfloor q\rfloor\le\frac12
			\end{cases}.
$$

\begin{prop}\label{prop:poly_conv}
For every $\eps>0$ and any finite collection of pairwise disjoint unit cubes parallel to the axes, $\bset{Q_j}_{j=1}^M$, there exists a polynomially convex set $U\subset\bunion j 1 M Q_j$ satisfying that for every $j$, there exists a $\frac1{2^{2d}}$ sub-cube of $Q_j$, $B_j$, so that $B_j^{-\frac{\eps}{M\cdot  2^{7d}}}\subset U$ and $m_{2d}\bb{\bunion j 1 M Q_j\setminus U}<\eps$, where
$$
B_j^{-\frac{\eps}{M\cdot  2^{7d}}}:=\bset{\vect z\in B_j, dist(\vect z,\partial B_j)\ge\frac{\eps}{M\cdot  2^{7d}}}.
$$
If $Q_j$ was a grid cube, then $Q_j^{-\frac{\eps}{M\cdot  2^{7d}}}\subset U$.
\end{prop}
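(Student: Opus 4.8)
The plan is to perturb the configuration $\{Q_j\}_{j=1}^M$, at a total measure cost below $\eps$, into a finite union of pairwise disjoint axis-parallel boxes whose polynomial convexity follows from Kallin's lemma by a finite recursion, while arranging the perturbation so that a designated $\frac1{2^{2d}}$-subcube of each $Q_j$ is never touched. Write $\delta:=\tfrac{\eps}{M\cdot 2^{7d}}$ for the target margin.

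First I would isolate the combinatorial core: if $\{P_t\}$ is a finite family of pairwise disjoint closed axis-parallel boxes in $\C^d\cong\R^{2d}$ such that, in each of the $2d$ real coordinates, the family of coordinate-projections is pairwise \emph{equal or disjoint}, then $\bigcup_t P_t$ is polynomially convex. Prove this by induction on the number of boxes. If all boxes coincide the union is convex and we are done; otherwise pick a real coordinate $k$ in which the projections are not all equal. The \emph{equal-or-disjoint} hypothesis then partitions the boxes into groups $G_1,\dots,G_p$ with $p\ge 2$ whose projections to coordinate $k$, and hence to the complex coordinate $z_l$ with $l=\lceil k/2\rceil$, lie in pairwise disjoint parallel slabs of $\C$. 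Applying one-variable Runge's theorem in the variable $z_l$ — to each group's projection, with its finitely many holes filled so that the complement is connected, which is harmless because the filled sets stay inside their disjoint slabs — produces a polynomial carrying each $G_m$ into a small disc about a distinct integer; since a finite disjoint union of discs is its own polynomially convex hull and hulls are monotone, Kallin's lemma (Lemma \ref{lem:Kallin}), applied $p-1$ times, gives $\widehat{\bigcup_t P_t}=\bigcup_{m=1}^p\widehat{\bigcup_{t\in G_m}P_t}$. Each $G_m$ is a strictly smaller instance, closing the induction.

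Next, the perturbation. Let $[q_j]\in\Z^{2d}$ be the point nearest to $q_j$, and let $B_j$ be the $\frac1{2^{2d}}$-subcube of $Q_j$ lying, in every coordinate, on the side of $q_j$ that contains $[q_j]$. An elementary computation gives $B_j\subset[q_j]+Q_0$ and $\mathrm{dist}\bb{B_j^{-\delta},\partial([q_j]+Q_0)}\ge\delta$; since the $Q_j$ are pairwise disjoint the points $[q_j]$ are distinct, so the lattice cubes $[q_j]+Q_0$ have disjoint interiors and the sets $B_j^{-\delta}$ are pairwise disjoint. Now delete from $\bigcup_j Q_j$ the open $\delta$-neighbourhoods of all hyperplanes $\{x_k\in\tfrac12+\Z\}$ (the facet hyperplanes of the unit lattice cubes): each $Q_j$ meets only boundedly many of these, so the deleted measure is at most $8d\,M\delta<\eps$, and the distance estimate shows no $B_j^{-\delta}$ is affected. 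For a grid cube $Q_j$ these hyperplanes meet only $\partial Q_j$, so $Q_j$ contributes exactly $Q_j^{-\delta}$, which is what the last sentence of the proposition asks for. In general what remains is a finite disjoint union of axis-parallel boxes, each contained in a single unit lattice cube, and inside a fixed lattice cube the boxes are pairwise separated in some coordinate, because the finitely many $Q_j$ meeting that cube are pairwise disjoint; one then wants to finish via the core lemma, handling each lattice cube separately.

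The step I expect to be the real obstacle is exactly the last one: having the boxes inside a lattice cube be \emph{pairwise separated in some coordinate} is strictly weaker than being \emph{coordinatewise equal-or-disjoint} — disjoint boxes can overlap partially in several coordinates while being separated in only one — and forcibly disjointifying them would cost $\Theta(1)$ measure rather than $O(\delta)$. The intended remedy is to iterate the slab-removal at dyadically finer scales inside each lattice cube, each surgery confined to the interior of the cube and routed around the reserved subcubes, with geometrically decreasing budgets summing to well under the remaining allowance; one must check that this terminates, that its cumulative cost stays below $\eps$, and — the delicate point — that it can always be carried out without biting into any $B_j^{-\delta}$. This is precisely the portion of the argument running parallel to \cite[Lemma 2.1]{AG2019}, and the factor $2^{7d}$ is the slack absorbing the at most $2^{2d}$ boxes per cube, the $2d$ coordinates, and the geometric series.
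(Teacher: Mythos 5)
Your setup — removing $\delta$-neighbourhoods of the lattice hyperplanes $\{x_k\in\frac12+\Z\}$, localizing to grid cubes, choosing the reserved sub-cube $B_j$ via the nearest lattice point $[q_j]$, and gluing with one-variable Runge plus Kallin's lemma (Lemma \ref{lem:Kallin}) — matches the paper's argument (its Lemma \ref{lem:hyper}, Proposition \ref{prop:grid} and Lemma \ref{lem:grid}), and your ``equal-or-disjoint'' core lemma is essentially the paper's Corollary \ref{cor:non-grid} about boxes generated by a finite axis-parallel hyperplane arrangement. But the step you flag as ``the real obstacle'' is a genuine gap, and the dyadic-iteration remedy you sketch is both unverified and unnecessary. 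The resolution is a single further cut, not an iteration: inside a fixed grid cube $G$, at most $2^{2d}$ of the $Q_j$ meet $G$ (any nonempty intersection of two closed unit cubes contains a vertex of $G$), so take the arrangement of \emph{all} $2\cdot 2d\cdot 2^{2d}$ facet hyperplanes of these cubes and delete the $\delta$-neighbourhood of each one from $G$. What survives is a union of cells of an axis-parallel hyperplane arrangement, whose coordinate projections are automatically pairwise equal-or-disjoint, so your core lemma applies verbatim; and the cost is $O(\delta)$ per hyperplane, i.e.\ $O(d\,4^{d}\delta)$ per cube, not the $\Theta(1)$ you feared — you conflated ``disjointifying the projections'' with ``refining to the common arrangement,'' and only the latter is needed. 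This is exactly where the budget $\delta=\eps/(M\cdot 2^{7d})$ comes from in the paper's count.

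The protection of $B_j^{-\delta}$ also needs an argument you did not supply, and it is not delicate once seen: in the grid cube $G_j$ containing $B_j$, first cut only along the $2d$ pairs of facet hyperplanes of $Q_j$ itself. The resulting cell containing the interior of $Q_j\cap G_j$ lies inside $Q_j$, hence meets no other cube of the (pairwise disjoint) collection, so it is never subdivided in the subsequent cuts; the remaining cells are handled as above. Consequently $B_j$ loses only width-$\delta$ strips along facets of $Q_j$ and of $G_j$, each of which is a facet hyperplane of $B_j$ itself or disjoint from $B_j^{-\delta}$ (your own distance computation), giving $B_j^{-\delta}\subset U$, and $Q_j^{-\delta}\subset U$ when $Q_j$ is a grid cube. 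With these two insertions your outline becomes the paper's proof.
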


%%%%%%%%%%%%%%%%%%%%%%%%%%%%%%%%%%%%%%%%%%%%%%%%%%%%%%%%%%%%%%%%%%%%%%%%%%%%%%%%%%%%%%%%%%%%%%%%%%%%%%%%%%%%%%%%%%%%%%

\subsection{On Polynomial Convexity and Hyperplanes}
To prove Proposition \ref{prop:poly_conv}, we require several geometric lemmas, which we prove in this subsection.

%%%%%%%%%%%%%%%%%%%%%%%%%%%%%%%%%%%%%%%%%%%%%%%%%%%%%%%%%%%%%%%%%%%%%%%%%%%%%%%%%%%%%%%%%%%%%%%%%%%%%%%%%%%%%%%%%%%%%%

\begin{lem}\label{lem:intersect}
Every closed unit cube parallel to the axes intersects $\Z^{2d}$.
\end{lem}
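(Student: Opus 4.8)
The plan is to exhibit, for any unit cube $Q$ parallel to the axes, an explicit lattice point lying in $Q$. Write $Q = q + Q_0$ with $q \in \R^{2d}$ and $Q_0 = [-\tfrac12,\tfrac12]^{2d}$. Using the rounding map introduced just above the proposition, set $p = [q] \in \Z^{2d}$. First I would verify the coordinatewise estimate $|p_j - q_j| \le \tfrac12$ for every $j$: by the definition of $[q]_j$ as the nearest integer (rounding down at the midpoint), $[q]_j$ is either $\lfloor q_j \rfloor$ or $\lceil q_j \rceil$, and in either case its distance to $q_j$ is at most $\tfrac12$. Hence $p - q \in [-\tfrac12,\tfrac12]^{2d} = Q_0$, which says exactly that $p \in q + Q_0 = Q$. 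Since $p \in \Z^{2d}$, this shows $Q \cap \Z^{2d} \neq \emptyset$, completing the proof.

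There is essentially no obstacle here; the only point requiring a line of care is the midpoint convention. When $q_j - \lfloor q_j \rfloor = \tfrac12$ exactly, the definition rounds down, so $[q]_j = \lfloor q_j \rfloor$ and $|[q]_j - q_j| = \tfrac12$, which still lies in the closed interval $[-\tfrac12,\tfrac12]$; this is why the cube must be taken closed for the statement to hold (an open unit cube can of course miss $\Z^{2d}$). For an arbitrary unit cube parallel to the axes that is not assumed to be centered at $q$, one writes it as $q + Q_0$ by taking $q$ to be its center, and the same argument applies verbatim. I expect the entire proof to be two or three sentences in the paper.
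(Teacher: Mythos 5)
Your proposal is correct and is essentially identical to the paper's proof: both take $[q]$, the nearest lattice point to the center $q$, note that $|q_j - [q]_j| \le \tfrac12$ in every coordinate by the definition of the rounding map, and conclude $[q] \in q + Q_0 = Q$. Your extra remark about the midpoint convention and the necessity of the cube being closed is a fine (and accurate) clarification, but nothing in your argument departs from the paper's.
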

\begin{proof}
Let $Q$ be a unit cube parallel to the axes, i.e., $Q=q+Q_0$. Let $\sbb q$ be the point from $\Z^{2d}$ closest to $q$. Since for every coordinate, by definition, $\abs{q_j-\sbb{q}_j}\le\frac12$ and $q$ is the center of the unit cube, $Q$, then $\sbb q\in Q\cap\Z^{2d}$.
\end{proof}

%%%%%%%%%%%%%%%%%%%%%%%%%%%%%%%%%%%%%%%%%%%%%%%%%%%%%%%%%%%%%%%%%%%%%%%%%%%%%%%%%%%%%%%%%%%%%%%%%%%%%%%%%%%%%%%%%%%%%%

\begin{lem}\label{lem:grid}
For every unit cube parallel to the axes, $Q$, there exists a grid-cube, $G$, so that $G\cap Q$ contains a $\frac1{2^{2d}}$ sub-cube of $Q$.
\end{lem}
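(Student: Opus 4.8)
The plan is to produce the grid-cube $G$ explicitly from $Q$ by rounding the center, exactly as in Lemma \ref{lem:intersect}, and then to check that the overlap between $Q$ and $G$ is large enough in every coordinate direction to contain a full $\frac1{2^{2d}}$ sub-cube of $Q$. Write $Q=q+Q_0$ and set $G=\sbb q+Q_0$, the grid-cube centred at the nearest lattice point $\sbb q\in\Z^{2d}$. Both cubes are unit cubes parallel to the axes, so $Q\cap G$ is itself an axis-parallel box, and it factors as a product over the $2d$ coordinate directions: in direction $j$ it is the intersection of the intervals $\sbb{q_j-\tfrac12, q_j+\tfrac12}$ and $\sbb{\sbb q_j-\tfrac12,\sbb q_j+\tfrac12}$. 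Since $\abs{q_j-\sbb q_j}\le\tfrac12$ by the definition of $\sbb q$, this intersection is an interval of length $1-\abs{q_j-\sbb q_j}\ge\tfrac12$, and crucially it contains $q_j$ in its closure (indeed $q_j$ lies in the closed interval $\sbb{\sbb q_j-\tfrac12,\sbb q_j+\tfrac12}$ precisely because $\abs{q_j-\sbb q_j}\le\tfrac12$).

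The key step is then the following observation: in each coordinate $j$, the half-interval $\sbb{q_j-\tfrac12,q_j+\tfrac12}$ of $Q$ splits at its midpoint $q_j$ into the two halves $\sbb{q_j-\tfrac12,q_j}$ and $\sbb{q_j,q_j+\tfrac12}$, and at least one of these two halves is entirely contained in $Q_j\cap G_j$. This is because $Q_j\cap G_j$ is a subinterval of $\sbb{q_j-\tfrac12,q_j+\tfrac12}$ that contains $q_j$ and has length at least $\tfrac12$; an interval of length $\ge\tfrac12$ inside an interval of length $1$ that contains the midpoint must contain one of the two closed half-intervals abutting the midpoint. Concretely, if $\sbb q_j\ge q_j$ then $Q_j\cap G_j\supseteq\sbb{q_j,q_j+\tfrac12}$, and if $\sbb q_j\le q_j$ then $Q_j\cap G_j\supseteq\sbb{q_j-\tfrac12,q_j}$. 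Record, for each $j$, a sign $i_j\in\bset{-1,0}$ so that the half $\sbb{\tfrac{i_j}{2},\tfrac{i_j+1}{2}}+q_j$ (in the notation of the definition of $Q^{\vect i}$) lies in $Q_j\cap G_j$.

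Finally I would assemble these choices: taking the vector $\vect i=(i_1,\dots,i_{2d})\in\bset{-1,0}^{2d}$ just selected, the $\frac1{2^{2d}}$ sub-cube $Q^{\vect i}=q+\prod_{j=1}^{2d}\sbb{\tfrac{i_j}{2},\tfrac{i_j+1}{2}}$ is, coordinatewise, contained in $Q_j\cap G_j$, hence $Q^{\vect i}\subseteq Q\cap G$, which is what we wanted. There is no real obstacle here — the only point requiring a moment's care is the coordinatewise claim that an interval of length $\ge\tfrac12$ inside the unit interval and containing its midpoint engulfs one of the two abutting half-intervals, and that the intersection of two unit axis-parallel cubes is the product of the coordinatewise interval intersections, which is immediate. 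I would present the argument coordinate by coordinate and then take the product, keeping the notation consistent with the definition of $Q^{\vect i}$ given above.
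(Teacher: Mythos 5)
Your proof is correct, but it takes a different route from the paper's. The paper also sets $G=\sbb q+Q_0$, but then argues globally: it suffices that $G$ contain both the center $q$ and some vertex $v$ of $Q$, since $Q\cap G$ is an axis-parallel box and hence contains the box spanned by $q$ and $v$, which is exactly a $\frac1{2^{2d}}$ sub-cube; the existence of such a vertex is then established by contradiction (if $G$ met $Q$ but contained no vertex of $Q$, it would have to be contained in $Q$, impossible for two distinct cubes of equal edge-length). You instead verify the inclusion coordinatewise and construct the index vector $\vect i$ explicitly, which is more computational but arguably more self-contained: it avoids the paper's slightly delicate contradiction step and produces the sub-cube $Q^{\vect i}$ by name. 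One small caveat: your stated general principle --- that any subinterval of length $\ge\frac12$ of a unit interval containing the midpoint must engulf one of the two abutting half-intervals --- is false as stated (e.g.\ $\sbb{\frac14,\frac34}\subset[0,1]$ contains the midpoint, has length $\frac12$, and contains neither half). What saves you is the extra fact, which you do use in the concrete case analysis, that $Q_j\cap G_j$ shares an endpoint with $Q_j$ (because $Q_j$ and $G_j$ have equal length, the intersection is obtained by clipping from one side only); the explicit verification ``if $\sbb q_j\ge q_j$ then $Q_j\cap G_j\supseteq\sbb{q_j,q_j+\frac12}$, else $Q_j\cap G_j\supseteq\sbb{q_j-\frac12,q_j}$'' is complete and correct, so the proof stands --- just drop or repair the faulty general formulation when you write it up.
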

\begin{rmk}
Note that if $Q$ is centered exactly at the intersection of $2^{2d}$ grid-cubes, then all of them contain a $\frac1{2^{2d}}$-sub-cube of $Q$.
\end{rmk}
\begin{proof}
Let $Q$ be a unit cube parallel to the axes, i.e., $Q=q+Q_0$. We define the cube $G=\sbb q+Q_0$.

It is enough to show that the cube $G$ contains both the center of $Q$ and one of its vertices to conclude it must contain a $\frac1{2^{2d}}$ sub-cube of $Q$. This is since the intersection of two closed boxes parallel to the axes is always a closed box parallel to the axes.

% proof of that- but I don't think we need it...
% Let $B_1,B_2$ are two such boxes, then $B_j=\bintersect k 1 d \bset{a_k^j\le x_k\le b_k^j}$ and therefore
%$$
%B_1\cap B_2=\bintersect k 1 d \bset{x_k\ge \max\bset{a_k^1,a_k^2}}\cap\bset{x_k\le \min\bset{b_k^1,b_k^2}},
%$$
%which is a closed box.

Assume that $G$ does not contain any vertex of $Q$. Following Lemma \ref{lem:intersect}, $\sbb q\in G\cap Q$ and so their intersection cannot be empty, then $G$ must be contained in $Q$. However, since they share the same edge-length, $Q$ cannot contain $G$, unless $G=Q$ and, in particular, $G$ must contain a vertex of $Q$.
\end{proof}

%%%%%%%%%%%%%%%%%%%%%%%%%%%%%%%%%%%%%%%%%%%%%%%%%%%%%%%%%%%%%%%%%%%%%%%%%%%%%%%%%%%%%%%%%%%%%%%%%%%%%%%%%%%%%%%%%%%%%%

\begin{lem}\label{lem:hyper}
Let $K_1,K_2\subset\C^{d}$ be two polynomially convex sets, and assume that there exists a hyperplane separating them, then $K_1\cup K_2$ is polynomially convex.
\end{lem}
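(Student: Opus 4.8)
The plan is to deduce the lemma from Kallin's lemma (Lemma~\ref{lem:Kallin}) after encoding the separating real hyperplane by a single complex polynomial. First I would observe that, after interchanging $K_1$ and $K_2$ if necessary, the separating hyperplane may be written as $\bset{\vect z:\ell(\vect z)=c}$, where $\ell\colon\C^d\cong\R^{2d}\to\R$ is a real-affine functional and $c\in\R$, with $\ell<c$ on $K_1$ and $\ell>c$ on $K_2$. The key elementary point is that every such $\ell$ is the real part of a complex-affine polynomial: writing $z_j=x_j+iy_j$ one has $\mathrm{Re}\bb{\sumit j 1 d a_jz_j}=\sumit j 1 d\bb{\mathrm{Re}(a_j)x_j-\mathrm{Im}(a_j)y_j}$, and as $(a_j)_{j=1}^d$ ranges over $\C^d$ this realizes every real-linear functional of the $2d$ real coordinates. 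Hence there is a polynomial $p$ of degree at most $1$ on $\C^d$ with $\mathrm{Re}\,p=\ell-c$, so that $p(K_1)$ is a compact subset of the open left half-plane $\bset{w\in\C:\mathrm{Re}\,w<0}$ and $p(K_2)$ a compact subset of the open right half-plane.

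Next I would check that the one-variable polynomially convex hulls $\widehat{p(K_1)}$ and $\widehat{p(K_2)}$ remain in these half-planes, hence are disjoint. This follows from the standard inclusion $\widehat E\subseteq\mathrm{conv}(E)$, valid for every compact $E\subset\C^d$, where $\mathrm{conv}(E)$ denotes the (compact) convex hull: since $p(K_1)$ is compact and contained in the open half-plane, it lies in a closed half-plane $\bset{w:\mathrm{Re}\,w\le-\delta}$ for some $\delta>0$; that half-plane is convex, so $\widehat{p(K_1)}\subseteq\mathrm{conv}(p(K_1))\subseteq\bset{w:\mathrm{Re}\,w\le-\delta}\subset\bset{w:\mathrm{Re}\,w<0}$, and symmetrically $\widehat{p(K_2)}\subset\bset{w:\mathrm{Re}\,w>0}$. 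Alternatively one may use the description of the planar polynomial hull of $E$ as $E$ together with the bounded components of $\C\setminus E$, noting that $\bset{w:\mathrm{Re}\,w>-\delta}$ is an unbounded connected subset of the complement of $p(K_1)$.

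Finally I would apply Lemma~\ref{lem:Kallin} to $K_1$ and $K_2$ with the polynomial $p$: since $\widehat{p(K_1)}\cap\widehat{p(K_2)}=\emptyset$ by the previous step, it gives $\widehat{K_1\cup K_2}=\widehat{K_1}\cup\widehat{K_2}$, which equals $K_1\cup K_2$ because both sets are assumed polynomially convex. This is exactly the assertion of the lemma. I do not anticipate a genuine obstacle here; the only points requiring care are the linear-algebra fact that an arbitrary real hyperplane in $\C^d$ is a level set of the real part of a degree-one holomorphic polynomial, and the standard control of polynomial hulls by convex hulls used to keep the images of $K_1$ and $K_2$ on opposite sides of $0$.
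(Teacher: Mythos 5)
Your proof is correct, and it reaches Kallin's lemma by a genuinely more direct route than the paper. The paper first invokes invariance of polynomial convexity under translations and ``rotations'' to normalize the separating functional to $L(\vect z)=\Re(z_1)$, then projects $K_1,K_2$ to the first complex coordinate and uses Runge's theorem to manufacture a one-variable polynomial $p_0$ that is near $0$ on one projection and near $1$ on the other, finally setting $p(\vect z)=p_0(z_1)$ and applying Lemma~\ref{lem:Kallin}. You instead observe that every real-linear functional on $\R^{2d}\cong\C^d$ is the real part of a complex-linear one, so the separating hyperplane is already the zero set of $\Re p$ for a degree-one holomorphic polynomial $p$; then $p(K_1)$ and $p(K_2)$ sit in opposite open half-planes, and the inclusion $\widehat E\subseteq\mathrm{conv}(E)$ (or, in $\C$, the description of $\widehat E$ via bounded complementary components) keeps their hulls disjoint, so Kallin applies. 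Your version dispenses with Runge's theorem entirely and, as a bonus, makes rigorous the normalization step that the paper passes over somewhat loosely: polynomial convexity is preserved by \emph{complex}-affine changes of coordinates, not by arbitrary real rotations of $\R^{2d}$, and the legitimate normalization is exactly the identity $L=\Re q$ with $q$ complex-linear that you write out explicitly. The paper's Runge-based argument is slightly more robust (it would still work if one only knew the two projections to a complex line had disjoint filled-in images, rather than being separated by a line), but for the lemma as stated your argument is complete and simpler.
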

\begin{rmk}
While the proof uses Runge's theorem, it only uses it for one variable and therefore it does not restrict the dimension, $d$.
\end{rmk}
\begin{proof}
Let $H$ be the hyperplane separating the sets $K_1$ from $K_2$, and let $L:\R^{2d}\rightarrow\R$ be a linear map satisfying $H=\bset{L=0}$. While $L$ is a linear map over $\R^{2d}$ it can be viewed as a map (but not linear) over $\C^d$, i.e., if $\vect z=(z_1,\cdots,z_d)\cong\bb{x_1,y_1,x_2,y_2,\cdots,x_d,y_d}\in\R^{2d}\cong\C^{d}$, then we abuse the notation of $L$ and define $L:\C^d\rightarrow\R$ by $L(\vect z)=L\bb{x_1,y_1,x_2,y_2,\cdots,x_d,y_d}$.

Since $H$ separates the sets $K_1$ from $K_2$, and both sets are compact, there exists $\eps>0$ so that $K_1\subset \bset{L(\vect z)>\eps}$ and $K_2\subset\bset{L(\vect z)< -\eps}$ (if the reverse holds the linear map $(-L)$ satisfies those requirements).

Note that polynomial convexity is invariant under rotations and translations. This holds since the set of polynomials is invariant under compositions with translations and rotations. We may therefore assume without loss of generality that $L(\vect z)=\Re(z_1)=x_1$.

Denote by $A_j$ the projection of the set $K_j$ onto the complex plane, i.e., $z\in A_j$ if there exists $\vect z\in K_j$ so that $z_1=z$. Note that $A_1,A_2$ are compact sets as projections of compact sets. Moreover if $z\in A_j$ denotes the projection of $\vect z\in K_j$, then
$$
dist(A_1,A_2)=\underset{z\in A_1\atop w\in A_2}\sup\;\abs{z-w}\ge\underset{z\in A_1\atop w\in A_2}\sup\;\abs{\Re(z-w)}=\underset{\vect z\in K_1\atop \vect w\in K_2}\sup\;\abs{L(\vect z)-L(\vect w)}\ge \eps-(-\eps)=2\eps.
$$
Let $p_0:\C\rightarrow\C$ be a polynomial satisfying $\abs{p_0(z)}<\frac13$ for every $z\in A_1$ and $\abs{p_0(z)-1}<\frac13$ for every $z\in A_2$. Such a polynomial can be constructed by applying Runge's theorem to the model map assigning $0$ to every $z\in A_1$ and $1$ to every $z\in A_2$ with error $\frac13$.

Define the polynomial $p(z_1,z_2,\cdots,z_d):=p_0(z_1)$. Then
$$
\widehat{p(K_1)}\cap \widehat{p(K_2)}\subset\widehat{\bb{-\frac13,\frac13}}\cap\widehat{\bb{\frac23,\frac43}}=\emptyset,
$$
and following Kallin's lemma, Lemma \ref{lem:Kallin}, $\widehat{K_1\cup K_2}=\widehat{K_1}\cup\widehat{K_2}=K_1\cup K_2$, i.e., $K_1\cup K_2$ is polynomially convex.
\end{proof}

%%%%%%%%%%%%%%%%%%%%%%%%%%%%%%%%%%%%%%%%%%%%%%%%%%%%%%%%%%%%%%%%%%%%%%%%%%%%%%%%%%%%%%%%%%%%%%%%%%%%%%%%%%%%%%%%%%%%%%

\begin{prop}\label{prop:grid}
Let $K\subset\C^{d}$ be a compact set which does not intersect the boundary of any grid-cube. If for every grid-cube, $G$, the set $K\cap G$ is polynomially convex, then $K$ is polynomially convex.
\end{prop}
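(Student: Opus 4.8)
The plan is to prove Proposition~\ref{prop:grid} by induction on the number of grid-cubes that meet $K$. Since $K$ is compact it is bounded, so only finitely many grid-cubes intersect it; call this number $N$. Because $K$ avoids the boundary of every grid-cube, and the grid-cubes cover $\R^{2d}$ with pairwise disjoint interiors, $K$ is in fact the \emph{disjoint} union of the sets $K\cap G$ as $G$ ranges over these $N$ grid-cubes. The base cases $N\le 1$ are immediate: $K$ is empty, or $K=K\cap G$ for a single grid-cube $G$, which is polynomially convex by hypothesis.

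For the inductive step I would take $N\ge 2$ and assume the proposition holds for every compact set that avoids all grid-cube boundaries, has polynomially convex intersection with every grid-cube, and meets fewer than $N$ grid-cubes. The centers of the $N$ relevant grid-cubes are distinct points of $\Z^{2d}$, so there is a coordinate $k\in\{1,\dots,2d\}$ in which they do not all agree. Let $m_0$ be the smallest $k$-th coordinate occurring among these centers, split the centers into the nonempty sets $A=\{$centers with $k$-th coordinate $=m_0\}$ and $B=\{$centers with $k$-th coordinate $>m_0$, hence $\ge m_0+1\}$, and let $K_A$ (resp. $K_B$) be the union of $K\cap G$ over grid-cubes $G$ whose center lies in $A$ (resp. $B$). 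Then $K=K_A\cup K_B$, and each of $K_A,K_B$ meets strictly fewer than $N$ grid-cubes. The key point is to check that $K_A$ and $K_B$ still satisfy the hypotheses of the proposition: they avoid all grid-cube boundaries (being subsets of $K$), and for any grid-cube $G$ the set $K_A\cap G$ equals $K\cap G$ if $G$ is centered in $A$ — polynomially convex by assumption — and is empty otherwise, since a point of $K_A$ lies in the \emph{interior} of some grid-cube with center in $A$ (here one uses boundary-avoidance a second time) and interiors of distinct grid-cubes are disjoint. So the inductive hypothesis applies and $K_A$, $K_B$ are polynomially convex.

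Finally I would separate the two pieces by a hyperplane and invoke Lemma~\ref{lem:hyper}. Every point of $K_A$ lies in the interior of a grid-cube whose center has $k$-th coordinate $m_0$, so $K_A\subset\{x_k<m_0+\frac12\}$; every point of $K_B$ lies in the interior of a grid-cube whose center has $k$-th coordinate at least $m_0+1$, so $K_B\subset\{x_k>m_0+\frac12\}$. Thus the affine hyperplane $\{x_k=m_0+\frac12\}$ strictly separates the polynomially convex sets $K_A$ and $K_B$ (and, as noted in the proof of Lemma~\ref{lem:hyper}, polynomial convexity is translation invariant, so passing to a hyperplane through the origin is harmless), and Lemma~\ref{lem:hyper} gives that $K=K_A\cup K_B$ is polynomially convex, completing the induction. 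I do not expect a serious obstacle here: there is no analytic content beyond the single application of Lemma~\ref{lem:hyper}, and the only step demanding care is the bookkeeping in the previous paragraph showing that each half inherits the hypotheses. (Equivalently, one can phrase the whole argument as a hyperplane sweep, splitting $K$ coordinate by coordinate until every cell is a single $K\cap G$, and then merging back with Lemma~\ref{lem:hyper}; the inductive formulation above just packages this more compactly.)
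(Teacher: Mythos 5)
Your proof is correct and rests on the same key step as the paper's: repeated application of Lemma~\ref{lem:hyper} with axis-parallel separating hyperplanes of the form $\{x_k=m+\frac12\}$, the only difference being bookkeeping — the paper runs a $2d$-stage sweep, merging the cubes coordinate by coordinate (first along $x_1$, then $y_1$, and so on), whereas you induct on the number of grid-cubes meeting $K$ and split off the extreme slab in a coordinate where the centers differ. Both are valid, and your version is a compact repackaging of the same argument.
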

\begin{proof}
We will prove this proposition by induction on the dimension. Note that since $K$ is compact, then there exists $N$ so that $K\subset [-N,N]^{2d}$.

{\bf Step 1:} For every $\bb{j_2,j_3,\cdots,j_{2d}}\in\Z^{2d-1}$ fixed, and every $-N\le k\le N$, the cube
$$
B_k^{\bb{j_2,\cdots,j_{2d}}}:=\bb{k,j_2,j_3,\cdots,j_{2d}}+Q_0,
$$
is a grid-cube. Now, $K\cap B_{-N}^{\bb{j_2,\cdots,j_{2d}}}$ is separated from $K\cap B_{-N+1}^{\bb{j_2,\cdots,j_{2d}}}$ by the hyperplane $H_1^1=\bset{x_1=-N+\frac12}$. Applying Lemma \ref{lem:hyper}, we conclude that the set
$$K\cap\bb{B_{-N}^{\bb{j_2,\cdots,j_{2d}}}\cup B_{-N+1}^{\bb{j_2,\cdots,j_{2d}}}}$$ is a polynomially convex set. Next, the set $K\cap\bb{B_{-N}^{\bb{j_2,\cdots,j_{2d}}}\cup B_{-N+1}^{\bb{j_2,\cdots,j_{2d}}}}$ is separated from the set $K\cap B_{-N+2}^{\bb{j_2,\cdots,j_{2d}}}$ by the hyperplane $H_2^1=\bset{x_1=-N+1+\frac12}$, and as before, applying Lemma \ref{lem:hyper}, the set $$K\cap\bb{B_{-N}^{\bb{j_2,\cdots,j_{2d}}}\cup B_{-N+1}^{\bb{j_2,\cdots,j_{2d}}}\cup B_{-N+2}^{\bb{j_2,\cdots,j_{2d}}}}$$ is polynomially convex. By induction, for every $\bb{j_2,\cdots,j_{2n}}$, the set $$K^{\bb{j_2,\cdots,j_{2d}}}:=K\cap \bunion k {-N}N B_{k}^{\bb{j_2,\cdots,j_{2d}}}$$ is a polynomially convex set.

{\bf Step 2:} For every $\bb{j_3,\cdots,j_{2d}}\in\Z^{2d-2}$ fixed, the set $K^{\bb{-N,j_3,\cdots,j_{2d}}}$ is separated from the set $K^{\bb{-N+1,j_3,\cdots,j_{2d}}}$ by the hyperplane $H_1^2:=\bset{y_1=-N+\frac12}$. Applying Lemma \ref{lem:hyper}, the set $$K^{\bb{-N,j_3,\cdots,j_{2d}}}\cup K^{\bb{-N+1,j_3,\cdots,j_{2d}}}$$ is polynomially convex. As in Step 1, by induction, the set $$K^{\bb{j_3,\cdots,j_{2d}}}:=\bunion k {-N}N K^{\bb{k,j_3,\cdots,j_{2d}}}$$ is polynomially convex.

{\bf Step n:} Assume without loss of generality that $n$ is even, the odd case is identical. For every $\bb{j_{n+1},\cdots,j_{2d}}\in\Z^{2d-n}$ fixed, the sets $K^{\bb{k,j_{n+1},\cdots,j_{2d}}}$ are polynomially convex by the induction assumption on step $(n-1)$. In addition, the set $K^{\bb{-N,j_{n+1},\cdots,j_{2d}}}$ is separated from the set $K^{\bb{-N+1,j_{n+1},\cdots,j_{2d}}}$ by the hyperplane $H_1^n:=\bset{y_{\frac n2}=-N+\frac12}$. If $n$ is odd, the hyperplane will be defined by $\bset{x_{\lfloor\frac n2\rfloor}=-N+\frac12}$. Applying Lemma \ref{lem:hyper}, the set $$K^{\bb{-N,j_{n+1},\cdots,j_{2d}}}\cup K^{\bb{-N+1,j_{n+1},\cdots,j_{2d}}}$$ is polynomially convex. As in Step 1, by induction, the set $$K^{\bb{j_{n+1},\cdots,j_{2d}}}:=\bunion k {-N}N K^{\bb{k,j_{n+1},\cdots,j_{2d}}}$$ is polynomially convex.

After $2d$ steps we obtain that the set $K=K\setminus\underset{q\in\Z^{2d}}\bigcup\partial\bb{q+Q_0}$ is polynomially convex.
\end{proof}

%%%%%%%%%%%%%%%%%%%%%%%%%%%%%%%%%%%%%%%%%%%%%%%%%%%%%%%%%%%%%%%%%%%%%%%%%%%%%%%%%%%%%%%%%%%%%%%%%%%%%%%%%%%%%%%%%%%%%%

\begin{cor}\label{cor:non-grid}
Let $K\subset\C^{d}$ be a compact set and assume that $K$ does not intersect a finite collection of hyperplanes parallel to the axes. If for every box, $B$, generated by those hyperplanes, the set $K\cap B$ is polynomially convex, then $K$ is polynomially convex.
\end{cor}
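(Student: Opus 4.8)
The plan is to reduce Corollary \ref{cor:non-grid} to Proposition \ref{prop:grid} by an affine change of coordinates followed by a limiting argument that absorbs the (finitely many) extra hyperplanes into a genuine integer grid. First I would record that, since the hypothesis involves only finitely many hyperplanes each of the form $\{x_k=c\}$ or $\{y_k=c\}$ for real constants $c$, and since $K$ is compact and disjoint from all of them, there is a positive distance $\delta$ between $K$ and the union of these hyperplanes. Collecting, for each of the $2d$ real coordinate directions, the finite set of constants $c$ that occur, I obtain a partition of each coordinate axis into finitely many open slabs; the boxes $B$ generated by the hyperplanes are exactly the products of these slabs, and each satisfies $K\cap B$ polynomially convex by hypothesis.

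The key step is to enlarge this finite family of hyperplanes to a full \emph{rectangular lattice} without destroying the polynomial convexity of the pieces. In direction $k$, insert additional hyperplanes $\{x_k=c\}$ so that the constants in that direction form an arithmetic progression (of some common spacing $h_k>0$) covering the bounded range where $K$ lives; this only subdivides the existing slabs, so for every box $B'$ of the refined family we still have $K\cap B'=K\cap B\cap B'$ polynomially convex — the intersection of a polynomially convex set with a box parallel to the axes is polynomially convex, exactly as used in the proof of Lemma \ref{lem:grid} (two closed boxes parallel to the axes intersect in a box parallel to the axes, and one can peel off half-spaces one at a time via Lemma \ref{lem:hyper}). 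Now apply an affine map $\Phi$ that rescales each real coordinate direction by $1/h_k$ and translates so that the refined family of hyperplanes becomes exactly $\{x_k\in\Z\}$, $\{y_k\in\Z\}$; this is a composition of a real-linear diagonal map and a translation. The image $\Phi(K)$ is compact, is disjoint from the boundary of every grid-cube (the $\delta>0$ gap survives, rescaled), and meets every grid-cube $G$ in $\Phi(K\cap\Phi^{-1}(G))$, the image under $\Phi$ of $K$ intersected with a box of the refined family, which is polynomially convex.

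The remaining point is that $\Phi$ preserves polynomial convexity, and here lies the only mild subtlety: $\Phi$ is $\R$-affine but not $\C$-affine, so I cannot simply quote "polynomial maps pull back polynomially convex sets." However, polynomial convexity in $\C^d\cong\R^{2d}$ can be tested against \emph{pluriharmonic} (equivalently, real parts of holomorphic) polynomials as well, and more to the point, the paper has already invoked (in Lemma \ref{lem:hyper}) that "polynomial convexity is invariant under rotations and translations"; the same reasoning shows it is invariant under any real-affine map that is a composition of a $\C$-linear scaling in each complex coordinate $z_k\mapsto \lambda_k z_k$ with a translation — and a diagonal \emph{real} rescaling $x_k\mapsto a_kx_k,\ y_k\mapsto b_ky_k$ with possibly $a_k\neq b_k$ is handled by noting it is a composition of the $\C$-linear map $z_k\mapsto a_kz_k$ with the map fixing $x_k$ and scaling $y_k$ by $b_k/a_k$, and this last type is itself a real-linear automorphism under which the class of polynomials, hence polynomial hulls, transforms in a controlled way. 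I would phrase this cleanly by observing that $\Phi$ maps the family of complex coordinate hyperplanes to itself and commutes with the relevant separation-by-a-polynomial step, so that the entire inductive scheme of Proposition \ref{prop:grid} — peeling grid-cubes off one real coordinate direction at a time using Lemma \ref{lem:hyper} — goes through verbatim for $\Phi(K)$. Applying Proposition \ref{prop:grid} to $\Phi(K)$ yields that $\Phi(K)$ is polynomially convex, and then transporting back by $\Phi^{-1}$ (of the same type) gives that $K$ is polynomially convex, completing the proof. The main obstacle, then, is purely bookkeeping: making the "polynomial convexity is affine-invariant for these particular affine maps" claim precise enough to license the coordinate change, after which Proposition \ref{prop:grid} does all the real work.
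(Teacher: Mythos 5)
Your strategy --- refine the given hyperplanes to a rectangular lattice, rescale to the integer grid, and then quote Proposition \ref{prop:grid} as a black box --- is not the paper's route (the paper simply reruns the induction of Proposition \ref{prop:grid} on the given boxes, since Lemma \ref{lem:hyper} never uses unit spacing), and as written it has two genuine gaps. First, the refinement step can fail outright: if the constants occurring in a single coordinate direction are incommensurable (say $0$, $1$ and $\sqrt2$), there is no arithmetic progression of common spacing $h_k$ containing all of them, so no affine map in that coordinate carries your family onto $\Z$. Second, and more seriously, the map $\Phi$ you need is a real-diagonal rescaling with, in general, $a_k\neq b_k$ (the $\{x_k=c\}$ and $\{y_k=c'\}$ families need different spacings), and such a map is not holomorphic: pulling back a holomorphic polynomial under $z_k\mapsto a_kx_k+ib_ky_k$ produces a polynomial in $z_k$ and $\bar z_k$. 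The class of holomorphic polynomials is therefore \emph{not} preserved, and real-linear images of polynomially convex sets need not be polynomially convex (compact subsets of totally real planes $\{(z,A\bar z)\}$ already illustrate that polynomial convexity is sensitive to exactly this kind of anisotropic distortion). The invariance invoked in Lemma \ref{lem:hyper} is only under translations and $\C$-linear rotations, and does not extend to your $\Phi$; calling this ``bookkeeping'' understates the problem. A smaller issue: your justification that intersecting a polynomially convex set with a box preserves polynomial convexity cites Lemma \ref{lem:hyper}, which concerns unions, not intersections; the correct (and true) reason is that the polynomial hull is contained in the closed convex hull, so it survives intersection with closed half-spaces parallel to the axes.

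The repair is to drop the coordinate change entirely. The proof of Proposition \ref{prop:grid} is an iterated application of Lemma \ref{lem:hyper}: in each real coordinate direction one peels off the slabs between consecutive hyperplanes one at a time, using only that consecutive pieces are separated by a hyperplane parallel to the axes. Nothing in that argument requires the hyperplanes to be integer-spaced or equally spaced, so running it verbatim with your finite collection $\{x_k=c\}$, $\{y_k=c'\}$ in place of the grid hyperplanes proves the corollary directly. This is exactly what the paper means by ``the same proof with more complicated notation.''
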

The proof is the same as the proof of Proposition \ref{prop:grid} but the notation is more complicated.

%%%%%%%%%%%%%%%%%%%%%%%%%%%%%%%%%%%%%%%%%%%%%%%%%%%%%%%%%%%%%%%%%%%%%%%%%%%%%%%%%%%%%%%%%%%%%%%%%%%%%%%%%%%%%%%%%%%%%%
%%%%%%%%%%%%%%%%%%%%%%%%%%%%%%%%%%%%%%%%%%%%%%%%%%%%%%%%%%%%%%%%%%%%%%%%%%%%%%%%%%%%%%%%%%%%%%%%%%%%%%%%%%%%%%%%%%%%%%

\subsection{The proof of Proposition \ref{prop:poly_conv}}
\begin{proof}
To simplify the notation in the proof, we write $\C^d\cong\R^{2d}=\bset{\bb{x_1,x_2,\cdots,x_{2d}}, x_j\in\R}$.

Since the collection of unit cubes is finite there exists $N\in\N$ so that $\bunion j 1 M Q_j\subset\sbb{-N,N}^{2d}$. For every $1\le k\le 2d$ and $-N\le j\le N-1$ we define the hyperplane $H_k^j:=\bset{x_k=j+\frac12}$, and the strip $S_k^j:=\bset{\abs{x_k-\bb{j+\frac12}}<\delta}$, where $\delta=\delta(\eps,M)$ will be defined at the end of the proof. Let $U_0:=\bb{\bunion j 1 M Q_j}\setminus\bb{\bunion k 1 {2d}\bunion j{-N}{N-1} S_k^j}$. Using Proposition \ref{prop:grid}, it is enough to show that there exists a set $U\subset U_0$ so that for every grid-cube, $G$, the set $G\cap U$ is polynomially convex, and $m_{2d}\bb{\bunion j 1 M Q_j\setminus U}<\eps$.

We will modify the set $U_0$ inside each grid-cube. First note that since the grid-cubes and the cubes, $Q_j$, are both closed, parallel to the axes, and have the same edge-length, then the intersection of any two of them is either empty of must contain a vertex of the grid-cube. Since the collection $\bset{Q_j}$ is pairwise disjoint, this means that for every grid-cube, $G$,
$$
\#\bset{j, Q_j\cap G\neq\emptyset}\le \#\bset{v, \text{vertex of }G}=\#\bset{\bset{-\frac12,\frac12}^{2d}}=2^{2d}.
$$
Next, following Lemma \ref{lem:grid}, for every cube, $Q_j$ there exists a grid-cube, which we will denote $G_j$, so that $Q_j\cap G_j$ contains a $\frac1{2^{2d}}$ sub-cube of $Q_j$. Since we saw that $Q_j$ contains the center of $G_j$, every grid-cube contains at most \underline{one} $\frac1{2^{2d}}$ sub-cube of $Q_j$ for some $Q_j$ in the collection.

Fix a grid-cube, $G$.

{\bf Case 1:} $G$ does not contain a $\frac1{2^{2d}}$ sub-cube of any cube in the collection, $\bset{Q_j}$.\\
For every $Q_{j_\nu}$ intersecting $G$, let $\bset{x_k=a_k^{\pm j_\nu}}_{k=1}^{2d}$ denote the collection of hyperplanes defining the cube $Q_{j_\nu}$, i.e., $Q_{j_\nu}=\bintersect k 1 {2d}\bset{a_k^{-j_\nu}\le x_k\le a_k^{+j_\nu}}$. We define
$$
U\cap G:=G\cap U_0\setminus\bb{\underset{j_u}\bigcup\bunion k 1 {2d}\underset{\sigma\in\bset{+,-}}\bigcup \bset{\abs{x_k-a_k^{\sigma j_\nu}}<\delta}},
$$
i.e., we remove from all the other cubes which intersect $G$ a 'fattening' by $2\delta$ of each hyperplane defining $Q_{j\nu}$. The hyperplanes $\bset{x_k=a_k^{\pm j_\nu}}_{k=1}^{2d}$ for $Q_{j_\nu}\cap G\neq\emptyset$ separate the components of $U\cap G$, which are now closed boxes. Each closed box is a polynomially convex set. Following Corollary \ref{cor:non-grid}, the set $U\cap G$ is polynomially convex as well. See Figure \ref{fig:cases} below.

{\bf Case 2:} $G$ contains a $\frac1{2^{2d}}$ sub-cube, i.e., $G=G_j$ for some cube, $Q_j$.\\
Essentially we will repeat the first case, but we first partition $G$ according the hyperplanes defining $Q_j$. Formally, assume $Q_j=\bintersect k 1 {2d}\bset{a_k^-\le x_k\le a_k^+}$, and define $S_k^{\pm}:=\bset{\abs{x_k-a_k^{\pm}}<\delta}$. Then $G\setminus\bunion k 1 {2d} S_k^\pm$ is a union of closed boxes, one of them contains a $\frac1{2^{2d}}$ sub-cube of $Q_j$ that 'lost' at most $2d$ 'fattened' hyperplanes from its boundary.

Next, we will repeat the same argument described in the first case in each box composing the set $G\setminus\bunion k 1 {2d} S_k^\pm$, to construct a set $U$. As in the first case, the set $U$ satisfies that in each of those boxes, $B$, the set $B\cap U$ is polynomially convex. Then we apply Corollary \ref{cor:non-grid} to the grid of hyperplanes giving rise to the cube $Q_j$ to conclude that $U\cap G$ is polynomially convex.

\begin{figure}[ht]
	\centering
	\includegraphics[scale=0.85]{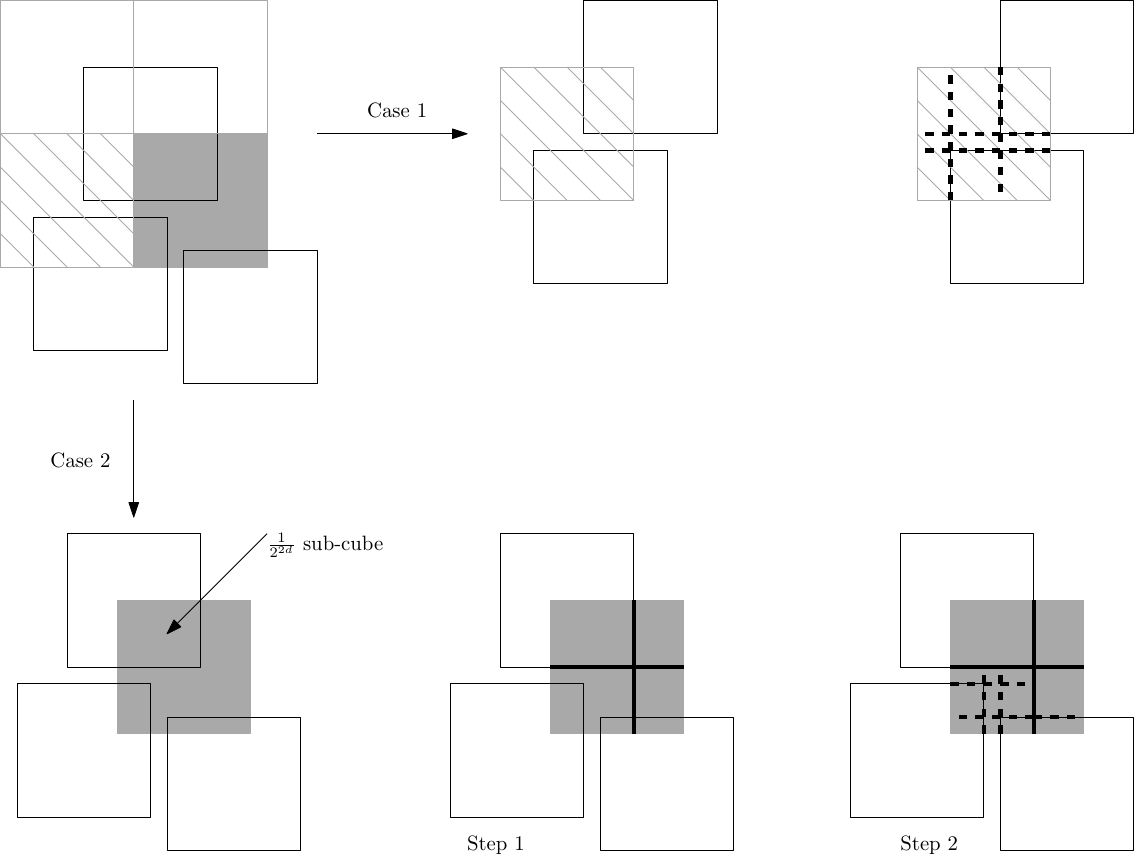}
	\caption{The top left figure shows four grid cubes (whose boundary is dark gray), and three cubes in the collection, $\bset{Q_j}$ (empty cubes with black boundary).\\
	The first case, where the grid cube is striped, is depicted in the top line. The second case, where the grid cube is dark gray, is depicted in the bottom line. The thick black lines are the strips we remove to create the set $U$. As described above, in the second case we have two steps.}
	\label{fig:cases}
\end{figure}

It is left to estimate how much measure we lost. Note that the volume every strip removes for every cube is bounded by $2\delta$ since $S\cap Q_j=q+[-\delta,\delta]\times\sbb{0,1}^{2d-1}$ where $q\in\R^{2d}$ is some translation of the strip. We will first bound how many strips we remove from every cube.

By duality between grid-cubes and cubes in the collection, the number of grid-cubes that intersect $Q_j$ is bounded by $2^{2d}$ as well. Each grid cube is responsible to removing up to $2^{2d}$ strips itself, and up to $2^{2d}\times 2^{2d}$ strips from the cubes in the collection it might intersect ($2^{2d}$ cubes each gives rise to at most $2^{2d}$ strips). Overall,
\begin{align*}
m_{2d}(Q_j\setminus U)&\le 2\delta\cdot\#\bset{S, \text{strip}, S\cap Q_j\neq\emptyset}\\
&\le 2\delta\cdot \#\bset{G,\text{grid-cube}, G\cap Q_j\neq\emptyset}\cdot \#\bset{S, \text{Strip in }G}\le2\delta\cdot 2^{2d}\bb{2^{4d}+2^{2d}}\le \delta\cdot 2^{7d}.
\end{align*}
Overall, we see that
$$
m_{2d}\bb{\bunion j 1 M Q_j\setminus U}\le M\cdot \delta\cdot 2^{7d}=\eps,
$$
if we set $\delta=\delta(\eps,M)=\frac{\eps}{M\cdot  2^{7d}}$. Naturally, if $Q_j$ was a grid cube, then $Q_j^{-\frac{\eps}{M\cdot  2^{7d}}}\subset U$ concluding the proof.
\end{proof}
\begin{rmk}
Note that since the $2d$ Lebesgue measure of hyperplanes is zero, the lemma holds for half open half closed cubes of the form $Q=q+[0,1)^{2d}$ as well.
\end{rmk}
%%%%%%%%%%%%%%%%%%%%%%%%%%%%%%%%%%%%%%%%%%%%%%%%%%%%%%%%%%%%%%%%%%%%%%%%%%%%%%%%%%%%%%%%%%%%%%%%%%%%%%%%%%%%%%%%%%%%%%
%%%%%%%%%%%%%%%%%%%%%%%%%%%%%%%%%%%%%%%%%%%%%%%%%%%%%%%%%%%%%%%%%%%%%%%%%%%%%%%%%%%%%%%%%%%%%%%%%%%%%%%%%%%%%%%%%%%%%%
%%%%%%%%%%%%%%%%%%%%%%%%%%%%%%%%%%%%%%%%%%%%%%%%%%%%%%%%%%%%%%%%%%%%%%%%%%%%%%%%%%%%%%%%%%%%%%%%%%%%%%%%%%%%%%%%%%%%%%

\section{Actions of Arbitrary Free Ergodic Groups}
This section will extend the construction done for the solenoidal group for general ergodic free actions of $\C^d$. The first subsection compiles definitions and known results that will be required for the proof. All of the definitions and proofs can be found in \cite{AG2019} and \cite{W}.
\subsection{Definitions and Preliminaries}
Throughout this section we let $\bb{X,\mathcal B,\mu}$ be a standard probability space and let $T:\C^d\rightarrow PPT(X)$ be an arbitrary free action of the group $\C^{d}$, i.e., for $\mu$-almost every $x\in X$, $T_{\vect z}x=x$ implies that $\vect z=\vect 0$. In other words, the set of all periodic points has measure zero.

\begin{defn}
A map $F:X\rightarrow\C$ is called {\bf measurably entire} if it is a non-constant measurable function and for $\mu$-almost every $x\in X$ the map $F_x:\C\rightarrow\C$ defined by $F_x(z):=F(T_zx)$ is entire. We will call it {\bf measurably entire dense} if in addition $F_x$ has a dense orbit under the action by translations, for almost every $x\in X$.
\end{defn}

We would like to define a function $F_x$ without worrying about the measurability in $x$. We will therefore use the notion of $S$-sets:
\begin{defn}
Let $S\subset\C^d$ be a compact set. A set $B\in\mathcal B$ is called  an {\bf $S$-set} if
\begin{enumerate}[label=($F_{\arabic*}$),leftmargin=0.7cm]
\item For every $\vect z\neq \vect w\in S$, $T_{\vect z}B\cap T_{\vect w}B=\emptyset$.
\item For every $B'\subset B\subset X$ measurable, and every $A\subset S\subset\C^d$ measurable, the set $AB':=\underset{\vect z\in A}\bigcup T_{\vect z}B'$ is a measurable subset of $X$.
\end{enumerate}
\end{defn}

In the definition above, the set $S$ is the set of 'shifts' by the action of $\C^d$, marked $T$, while $B\subset X$ is a $\mu$-null set that we 'shift' by elements of $S$ in the space $X$. While $B$ is $\mu$-null set, we can still identify the set $SB$ with the product space $S\times B$:
\begin{rmk}\label{rmk:JonsObservation}\cite[Remark 2.5]{AG2019}
For every square $S\subset\C$ and every $B\in\mathcal B$, an $S$-set
$$
\mu\bb{AB}:=\mu\bb{\bset{T_{\vect z}B,\; \vect z\in A}}=\frac{m_{2d}(A)}{m_{2d}(S)}\cdot\mu\bb{SB},\;\text{ for every } A\subset S\text{ measurable},
$$
where $m_{2d}$ denotes the $2d$-dimensional Lebesgue measure.
\end{rmk}
We can use this property to define a measure on Borel subsets of the set $B$-
$$
\nu(B')=\frac{\mu\bb{SB'}}{\mu\bb{SB}},
$$
and, in turn, define the product measure on the product space as follows; for every $B'\subset B$ and $A\subset S$ we let
$$
\bb{ \bb{\frac{1}{m_{2d}(S)}\cdot m_{2d}(\cdot)}\times \nu\bb\cdot}(A\times B')=\frac{m_{2d}(A)}{m_{2d}(S)}\cdot \nu(B')=\frac{\mu\bb{AB}}{\mu\bb{SB}}\cdot \frac{\mu\bb{SB'}}{\mu\bb{SB}}.
$$
For our purpose, $S$ will be a $2d$ dimensional cube.

We are interested in $S$-sets, for $S\subset\C^d$ a cube, because these sets allow us to assign for every $x\in B$ a function $F_x:S\rightarrow \C$, which is holomorphic in $S$, creating a measurably entire function $F:SB\rightarrow\C^d$ defined by $F(T_{\vect z}x)=F(\vect z,x)=F_x(\vect z)$ without worrying about inconsistencies in the definition of $F$. Note that as $B$ is an $S$-set, the map $T_{\vect z}x\mapsto (\vect z,x)$ is well defined, and so is our function $F$. We would therefore like to approximate our space $X$ by a sequence of sets $\bset{S_nB_n}_{n=1}^\infty$ where $B_n$ is an $S_n$-set, and  $S_n=\left[0,a_n\right)^{2d}$ for $a_n\nearrow\infty$.

The first lemma is a natural extension of Rokhlin's lemma, that approximates the space $X$ by a sequence of product spaces $S_n\times B_n$ where $B_n\subset X$ and $S_n\subset\C^d$.
\begin{lem}\label{lem:Nested}[The Nested Towers Lemma]
Let $\bset{a_n}_{n=1}^\infty$ be an increasing sequence of positive numbers such that $\sumit n 1 \infty \frac{a_n}{a_{n+1}}<\frac12$. Then there exists a sequence of sets $\bset{B_n}\subset \mathcal B$ such that
\begin{enumerate}[label=($N_\arabic*$),leftmargin=0.7cm]
\item $B_n$ is an $S_n=\left[0,a_n\right)^{2d}$-set.
\item $S_nB_n\subset S_{n+1}B_{n+1}$.
\item $\mu\bb{S_nB_n}\nearrow1$ as $n\rightarrow\infty$.
\end{enumerate}
\end{lem}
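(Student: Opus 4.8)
The plan is to build the sets $B_n$ inductively, at each stage refining the previous tower and enlarging the base space, using a suitable $\C^d$-version of Rokhlin's lemma for free actions as the engine. The key observation is that condition ($N_2$), the nesting $S_nB_n\subset S_{n+1}B_{n+1}$, forces us to choose $B_{n+1}$ \emph{compatibly} with $B_n$: concretely, we want $B_n$ to sit inside $S_{n+1}B_{n+1}$ in such a way that the $S_n$-translates of $B_n$ land inside distinct $S_{n+1}$-translates of $B_{n+1}$. Since $a_n/a_{n+1}$ is small, a copy of $S_n=[0,a_n)^{2d}$ fits comfortably inside $S_{n+1}=[0,a_{n+1})^{2d}$, so geometrically there is room; the issue is doing this measurably and simultaneously over the whole base.

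First I would recall (citing \cite{W} or \cite{AG2019}, where this is available) the Rokhlin tower theorem for free $\C^d$-actions: for any $a>0$ and any $\eta>0$ there is a measurable set $B$ which is an $S=[0,a)^{2d}$-set with $\mu(SB)>1-\eta$. This gives $B_1$ directly: choose $\eta_1$ small. For the inductive step, suppose $B_n$ has been constructed as an $S_n$-set with $\mu(S_nB_n)=1-\delta_n$. Apply the Rokhlin theorem again with the larger cube $S_{n+1}=[0,a_{n+1})^{2d}$ to get a set $B_{n+1}'$ which is an $S_{n+1}$-set with $\mu(S_{n+1}B_{n+1}')$ close to $1$. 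Now I would \emph{adjust} $B_{n+1}'$ so that the nesting holds: inside the product picture $S_{n+1}\times B_{n+1}'$, the set $S_nB_n$ appears (up to a small-measure error, where it escapes the new tower) as a measurable subset; since $S_n$ is much smaller than $S_{n+1}$, one can push the relevant portion of $B_n$ to a canonical ``slot'' near the corner of each $S_{n+1}$-column. Formally, define a measurable map that identifies, for a.e. point of $S_nB_n$ lying in $S_{n+1}B_{n+1}'$, its $S_{n+1}$-coordinate, and let $B_{n+1}$ be the set of points whose $S_{n+1}$-coordinate has been translated back so that $B_n\subset S_{n+1}B_{n+1}$ and distinct $S_n$-fibers of $B_n$ go to distinct $B_{n+1}$-points. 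This is the standard ``Kakutani skyscraper refinement'' adapted to the continuous group: it preserves the $S_{n+1}$-set property because we only translate within $S_{n+1}$, and it only discards the part of $S_nB_n$ that was not covered by $S_{n+1}B_{n+1}'$, whose measure we control.

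The measure bookkeeping for ($N_3$) is then routine: at each stage the part of $S_nB_n$ that fails to be inside $S_{n+1}B_{n+1}$ has measure at most the error $\delta_{n+1}$ from the Rokhlin step plus a geometric loss proportional to $a_n/a_{n+1}$ coming from the boundary columns of $S_{n+1}$ that cannot fully contain a translate of $S_n$; the hypothesis $\sum_n a_n/a_{n+1}<\tfrac12$ together with a summable choice of the $\delta_n$ guarantees $\mu(S_nB_n)\nearrow 1$ (in fact that the increasing sequence $S_nB_n$ exhausts $X$ up to null sets). The main obstacle is the inductive adjustment step — making the nesting $S_nB_n\subset S_{n+1}B_{n+1}$ precise and measurable while simultaneously keeping $B_{n+1}$ an honest $S_{n+1}$-set; once one sets up the skyscraper/tower-refinement correctly for the free $\C^d$-action (where ``time'' is now a $2d$-dimensional continuum rather than $\Z$), conditions ($N_1$) and ($N_2$) hold by construction and ($N_3$) is a convergent-series estimate. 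I would flag that all the needed measurable-selection machinery is exactly what is recorded in \cite{AG2019} and \cite{W}, so the proof is mostly a matter of assembling those pieces in the right order.
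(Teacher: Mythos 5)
The paper itself does not prove this lemma; it defers to the appendix of \cite{AG2019} (for $d=1$, with the remark that the argument is dimension-independent). Your overall architecture --- a Rokhlin-type tower theorem for free $\C^d$-actions to produce each $S_{n}$-set, an inductive compatibility step to get the nesting, and a summable-error bookkeeping using $\sum a_n/a_{n+1}<\tfrac12$ --- is indeed the architecture of that proof, and you correctly identify the two sources of loss (the Rokhlin error and the boundary layer of relative size $O(a_n/a_{n+1})$).

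However, the way you implement the key step ($N_2$) would not work as described. You propose to \emph{translate} the base points of the new tower $B_{n+1}'$ so that $B_n$ ``lands in a canonical slot.'' Two problems: first, translating each $y\in B_{n+1}'$ by its own vector can make previously disjoint columns $S_{n+1}T_{\vect z}y$ overlap, destroying property $(F_1)$, so the $S_{n+1}$-set structure is not automatically preserved; second, and more fundamentally, a single translation of $y$ cannot simultaneously arrange that \emph{every} $\lambda\in\Lambda(y)=\{\lambda\in S_{n+1}: T_\lambda y\in B_n\}$ satisfies $\lambda+S_n\subset S_{n+1}$ --- the offending $\lambda$'s form a neighborhood of the boundary of the cube, and shifting the cube only moves which $\lambda$'s are offending. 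The mechanism that actually works is the opposite one: leave $B_{n+1}$ essentially equal to $B_{n+1}'$ and \emph{shrink the earlier bases}, deleting from $B_n$ every point whose $S_n$-column either misses $S_{n+1}B_{n+1}'$ or sits in the width-$a_n$ boundary layer of its $S_{n+1}$-column. Since deleting columns from $S_nB_n$ can break the already-established inclusion $S_{n-1}B_{n-1}\subset S_nB_n$, these deletions must be propagated down to all earlier $B_m$, and the final sets are obtained as a decreasing limit; only then does the hypothesis $\sum a_n/a_{n+1}<\tfrac12$ (together with summable Rokhlin errors) give $(N_3)$. This downward propagation and passage to the limit is the genuinely delicate part of the lemma, and it is absent from your sketch.
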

A detailed proof can be found for example in the appendix of \cite{AG2019}. While the proof deals with the case of $d=1$ the same proof holds for all $d$.

Let $\bset{B_n}$ be a sequence of sets given by the Nested Towers Lemma, Lemma \ref{lem:Nested}. Assume that $\mathcal P_{n-1}$ is a finite partition of $B_{n-1}$ into measurable sets $\bset{B_{n-1}^j}_{j=1}^{k_{n-1}}$. For every $x\in B_n$ and $1\le\ell\le k_{n-1}$ define the set
$$
R_n^\ell(x):=\bset{\vect z\in S_n;\;T_{\vect z}x\in B_{n-1}^\ell}.
$$
Intuitively, the set $R_n^\ell(x)$ is the set of all points in the cube $S_n$ which will send $x$ back to $B_{n-1}^\ell$. Since $B_{n-1}$ is an $S_{n-1}$-set, for every $\vect z\neq \vect w\in R_n^\ell(x)$ we have that $S_{n-1} T_{\vect z}x\cap S_{n-1} T_{\vect w}x=\emptyset$ which implies  $\norm{\vect z-\vect w}\infty>a_{n-1}$. In particular, $R_n^\ell(x)$ is a finite set and therefore compact.

Following \cite{AG2019}, given $\delta>0$, we say a partition $\mathcal P_n=\bset{B_n^j}_{j=1}^{k_n}$ is a {\bf $\delta$-fine partition consistent with $\mathcal P_{n-1}$} if it is a finite measurable partition of $B_n$, and for every $1\le j\le k_n$ for every $x,y\in B_n^j$, $d_H(R_n^\ell(x),R_n^\ell(y))<\delta$ for every $1\le\ell\le k_{n-1}$, where $d_H$ denotes the Hausdorff distance between sets (see Figure \ref{fig:partition} below).

\begin{figure}[ht]
	\centering
	\includegraphics[scale=0.75]{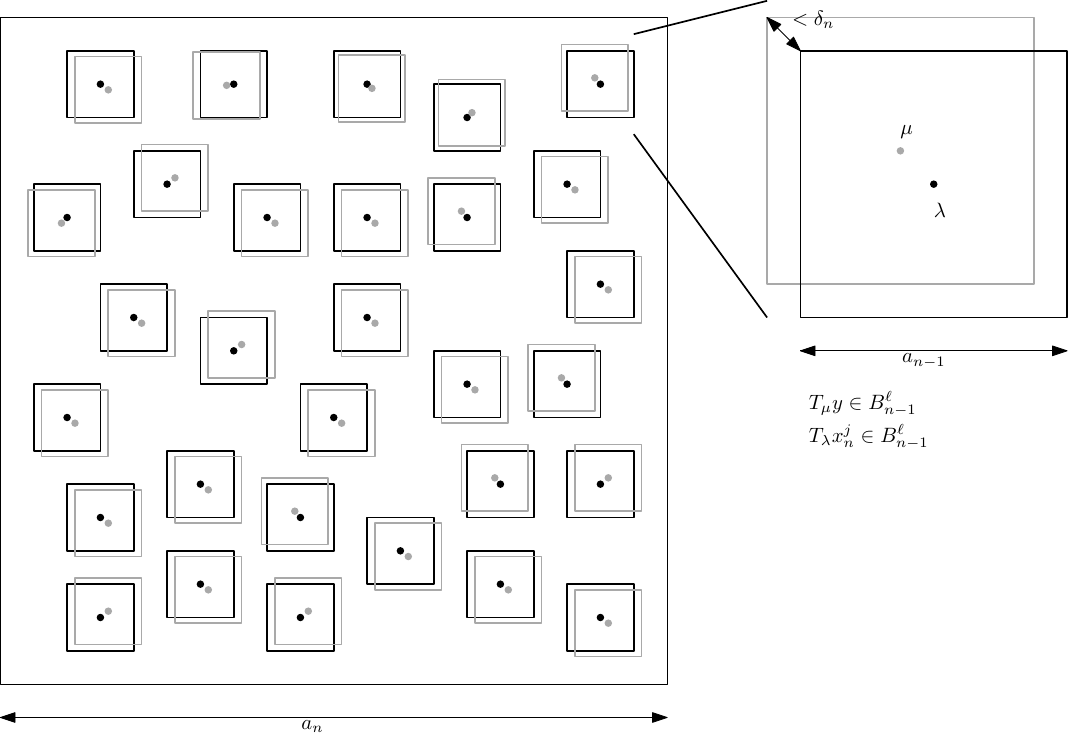}
	\caption{An example of the sets $R_n^\ell(x)$ and $R_n^\ell(y)$ for some $y,x$ in the same partition set.}
	\label{fig:partition}
\end{figure}

\begin{lem}\label{lem:Partition}\cite[Lemma 2.7]{AG2019}
Let $\bset{B_n}$ be a sequence of sets given by the Nested Towers Lemma. For every sequence of positive numbers $\bset{\delta_n}$ there exists a sequence of partitions $\bset{\mathcal P_n}$ such that for every $n$, $\mathcal P_n$ is a $\delta_n$-fine partition consistent with $\mathcal P_{n-1}$.
\end{lem}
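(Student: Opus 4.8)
The plan is to build the partitions $\mathcal P_n$ by induction on $n$, producing $\mathcal P_n$ from $\mathcal P_{n-1}$ and $\delta_n$ alone. For $n=1$ there is no constraint, so I would simply take $\mathcal P_1=\{B_1\}$. For the inductive step, suppose $\mathcal P_{n-1}=\{B_{n-1}^j\}_{j=1}^{k_{n-1}}$ is already given, so that the finite compact sets $R_n^\ell(x)\subset S_n$ are defined for $1\le\ell\le k_{n-1}$ and $x\in B_n$; recall from the discussion preceding the lemma that, because $B_{n-1}$ is an $S_{n-1}$-set, any two distinct points of $R_n^\ell(x)$ are at $\ell^\infty$-distance $>a_{n-1}$.

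The construction itself is purely geometric. I would choose $\rho>0$ with $\rho<a_{n-1}$, with $\sqrt{2d}\,\rho<\delta_n$, and with $a_n/\rho$ an integer, and tile the half-open cube $S_n=[0,a_n)^{2d}$ by the congruent half-open sub-cubes $\Delta_1,\dots,\Delta_p$ of side $\rho$. Since $\rho<a_{n-1}$, each $\Delta_s$ contains at most one point of $R_n^\ell(x)$, for every $x$ and $\ell$. I would then set
$$
E_{\ell,s}:=\bset{x\in B_n:\ R_n^\ell(x)\cap\Delta_s\neq\emptyset},
$$
and let $\mathcal P_n$ be the common refinement of the finitely many two-set partitions $\{E_{\ell,s},\,B_n\setminus E_{\ell,s}\}$, $1\le\ell\le k_{n-1}$, $1\le s\le p$, with empty cells discarded. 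On any cell of $\mathcal P_n$ and for fixed $\ell$, the set of indices $s$ with $\Delta_s\cap R_n^\ell(x)\neq\emptyset$ is independent of $x$ in the cell, and $R_n^\ell(x)$ has exactly one point in each such $\Delta_s$; hence for $x,y$ in the same cell the point of $R_n^\ell(x)$ in $\Delta_s$ and the point of $R_n^\ell(y)$ in $\Delta_s$ lie within $\mathrm{diam}(\Delta_s)=\sqrt{2d}\,\rho$ of one another, so $d_H(R_n^\ell(x),R_n^\ell(y))\le\sqrt{2d}\,\rho<\delta_n$ for all $\ell\le k_{n-1}$. This is exactly the $\delta_n$-fine-consistency requirement, and the induction closes.

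The one point that needs care — and, I expect, the crux of the argument — is the measurability of the sets $E_{\ell,s}$; everything else is elementary combinatorics of boxes. Here I would invoke property $(F_2)$ of $S$-sets. Rewriting $T_{\vect z}x\in B_{n-1}^\ell$ as $x\in T_{-\vect z}B_{n-1}^\ell$ and using $\Delta_s\subset S_n$ gives $E_{\ell,s}=B_n\cap\bigcup_{\vect z\in-\Delta_s}T_{\vect z}B_{n-1}^\ell$. Since $-\Delta_s$ is a box of side $\rho<a_{n-1}$, I can pick $\vect c_s\in\R^{2d}$ with $-\Delta_s+\vect c_s\subset S_{n-1}=[0,a_{n-1})^{2d}$, and then $\bigcup_{\vect z\in-\Delta_s}T_{\vect z}B_{n-1}^\ell=T_{-\vect c_s}\bigl((-\Delta_s+\vect c_s)B_{n-1}^\ell\bigr)$. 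The set $(-\Delta_s+\vect c_s)B_{n-1}^\ell$ is measurable by $(F_2)$ applied to the $S_{n-1}$-set $B_{n-1}$ (a Borel subset of $S_{n-1}$ acting on the measurable set $B_{n-1}^\ell\subset B_{n-1}$), and each $T_{\vect c}$ is bi-measurable, so $E_{\ell,s}\in\mathcal B$ and $\mathcal P_n$ is a finite measurable partition of $B_n$. I note that this uses nothing about $T$ beyond the $S$-set axioms, so no separate appeal to joint measurability of the action is needed.
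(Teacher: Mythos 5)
The paper itself contains no proof of this lemma --- it is quoted from \cite[Lemma 2.7]{AG2019} with only the remark that the one-dimensional proof extends to all $d$ --- so there is nothing in the text to compare against line by line; judged on its own, your proof is correct. Your construction (tile $S_n$ by half-open boxes of side $\rho<\min\{a_{n-1},\delta_n/\sqrt{2d}\}$ so that each box meets each finite set $R_n^\ell(x)$ in at most one point, classify $x\in B_n$ by which boxes each $R_n^\ell(x)$ meets, and obtain measurability of the classifying sets $E_{\ell,s}$ from property $(F_2)$ after translating $-\Delta_s$ into $S_{n-1}$) is precisely the standard argument behind the cited lemma, and you correctly isolate the measurability of $E_{\ell,s}$ as the only non-trivial point.
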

As in the case of the Nested Towers Lemma, this lemma was proved for $d=1$ but the same proof extends to higher dimensions.
%%%%%%%%%%%%%%%%%%%%%%%%%%%%%%%%%%%%%%%%%%%%%%%%%%%%%%%%%%%%%%%%%%%%%%%%%%%%%%%%%%%%%%%%%%%%%%%%%%%%%%%%%%%%%%%%%%%%%%

\subsection{On the Existence of Measurably Entire Dense Functions}
\begin{thm}\label{thm:general}
For every standard probability space, $\bb{X,\mathcal B,\mu}$, and every free action of the group $\C^{d}$, $T:\C^d\rightarrow PPT(X)$, there exists a measurably entire dense function.
\end{thm}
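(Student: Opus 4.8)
The strategy is to transfer the solenoid construction of Section 2--3 to the abstract setting using the Nested Towers Lemma (Lemma \ref{lem:Nested}) in place of the explicit tower structure $G = S_n \times B_n$, and the $\delta_n$-fine partitions (Lemma \ref{lem:Partition}) to make the inductive Oka--Weil approximations compatible across levels. First I would fix a sequence $a_n \nearrow \infty$ with $\sum_n a_n/a_{n+1} < \tfrac12$ and apply the Nested Towers Lemma to obtain the sets $B_n$, together with a countable dense family of polynomials $\{p_n\}$ in $\mathcal E(d)$. The function $F$ is built as a uniform-on-compacts limit of functions $F_n$, where $F_n$ is defined on $S_nB_n$ and is holomorphic on $S_n\{x\}$ for each $x$; concretely $F_n$ is prescribed on each partition piece $B_n^j$ by a polynomial in $\vect z \in S_n$, so that $F_n(T_{\vect z}x) = F_{n}^{(j)}(\vect z)$ for $x \in B_n^j$.

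The inductive step is where the new geometric input enters. Given $F_{n-1}$ defined via the partition $\mathcal P_{n-1}$ of $B_{n-1}$, and given $x \in B_n$, the set $S_nB_n$ decomposes (up to the $\mu$-null overlaps) into the translates $T_{\vect z}(S_{n-1}B_{n-1}^\ell)$ for $\vect z \in R_n^\ell(x)$; these occupy a union of cubes of side $a_{n-1}$ inside $S_n$ whose centers are $a_{n-1}$-separated. Unlike the solenoid case these cubes are \emph{not} of product form $\bigcup K_i \times L_j$, so Proposition \ref{products} no longer applies — instead I invoke Proposition \ref{prop:poly_conv}: after rescaling to unit cubes, any finite pairwise-disjoint collection of axis-parallel cubes becomes polynomially convex once one deletes a set of measure $<\eps$, retaining a slightly shrunk sub-cube inside each. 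Choosing $\eps = \eps_n \to 0$ summably, and choosing the consistency parameter $\delta_n$ of $\mathcal P_n$ small enough that the sets $R_n^\ell(x)$ (hence the geometry of these cube-unions) are essentially constant on each piece $B_n^j$, I apply the Oka--Weil theorem on the resulting polynomially convex set $U_n$: keep $F_{n-1}$ on the retained shrunken sub-cubes, overwrite it by (a translate of) $p_n$ on one designated sub-cube, and approximate within $10^{-n}$ by a polynomial $F_n$ on each fiber. Telescoping the errors $\sum 10^{-n} < \infty$ gives uniform-on-compacts convergence $F_n^x \to F_x$ on $\C^d$ for a.e. $x$, so $F_x$ is entire; and the deleted "slit" regions have total measure $\sum \eps_n$ which, combined with $\mu(S_nB_n)\to 1$, can be arranged (via a Borel--Cantelli argument on $n$, letting $\eps_n$ shrink fast) so that a.e. $x$ eventually avoids all slits — exactly as in the sentence "for a.e. $g$ for every $n$ large enough $D_Rg \subset \bigcup K_{n,ij}B_n$" in Section 2.

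It remains to check density of the orbit. Since the $p_n$ are dense in $\mathcal E(d)$ and at stage $n$ the function $F_x$ agrees, on a large ball around some point $\vect z_n(x) \in S_n$, with $p_n$ up to an error $\sum_{m\ge n} 10^{-m} \to 0$, the translate $T_{\vect z_n(x)}F_x$ is within any prescribed tolerance of $p_n$ on any prescribed compact set once $n$ is large; hence $\{T_{\vect w}F_x : \vect w \in \C^d\}$ is dense in $\mathcal E(d)$ for a.e. $x$, i.e. $F$ is measurably entire dense. Finally, $F$ is non-constant (it approximates non-constant polynomials) and measurability in $x$ follows because on each $S_nB_n$ the definition is piecewise given through the finite measurable partition $\mathcal P_n$ and the map $T_{\vect z}x \mapsto (\vect z,x)$ is measurable on an $S_n$-set.

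\textbf{Main obstacle.} The delicate point is the simultaneous bookkeeping: one must choose the partition fineness $\delta_n$, the Oka--Weil error $10^{-n}$, and the slit-measure budget $\eps_n$ in the right order so that (i) $R_n^\ell(\cdot)$ is constant enough on partition pieces for $F_n$ to be well-defined fiberwise by a single polynomial, (ii) the shrunken retained sub-cubes from Proposition \ref{prop:poly_conv} at stage $n$ still contain the regions where stage $n+1$ needs to read off $F_n$, and (iii) the almost-everywhere "eventually avoids all slits and lies deep in a tower" conclusion survives. I expect managing the interaction between the \emph{non-explicit} tower geometry (where cube positions depend measurably on $x$) and the fixed combinatorial constant $2^{7d}$ in Proposition \ref{prop:poly_conv} to be the part requiring the most care; everything else is a routine adaptation of the solenoid argument.
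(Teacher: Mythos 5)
Your plan follows the paper's proof essentially step for step: the Nested Towers Lemma (Lemma \ref{lem:Nested}), $\delta_n$-fine consistent partitions (Lemma \ref{lem:Partition}) so that $F_n$ is fiberwise a single polynomial on each partition piece $B_n^j$, Proposition \ref{prop:poly_conv} followed by Oka--Weil in the inductive step, and Borel--Cantelli to dispose of the discarded slits and the error sets. There is one concrete point where your description, taken literally, would break: you overwrite $F_{n-1}$ by a translate of $p_n$ on \emph{one} designated sub-cube of $S_n$. The paper instead clears out $2^{2d}$ copies of $S_{n-1}$, one centered in each $\frac{1}{2^{2d}}$ sub-cube of $S_n$, and plants $p_n$ in every one of them. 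The reason is precisely your obstacle (ii): at stage $n+1$ the cube $\lambda+S_n$ sits at an $x$-dependent, generally non-grid position inside $S_{n+1}$, so Proposition \ref{prop:poly_conv} only guarantees that \emph{some unspecified one} of its $2^{2d}$ sub-cubes survives (shrunk by $2^{-(n+1)}$), and you cannot control which. With a single planted copy of $p_n$, that copy could lie in a sub-cube that is discarded at the next stage, and the density condition (the paper's Condition (D)) would fail to propagate. Planting a copy in each sub-cube turns Condition (D) into an invariant of the induction: whichever sub-cube survives still carries approximations to $p_1,\dots,p_n$ on successively shrunken sub-cubes of $S_1,\dots,S_n$ with accumulated error $\sum_{\ell>k}2^{-\ell}$. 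With this $2^{2d}$-fold redundancy added, your outline coincides with the paper's argument.
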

\begin{proof}
Let $\bset{a_n}\nearrow\infty$ be a sequence growing fast enough and denote by $S_n:=\left[0,a_n\right)^{2d}$. Let $\bset{B_n}$ be a sequence of sets obtained for the sequence $\bset{a_n}$ by Weiss' Nested Towers Lemma, Lemma \ref{lem:Nested}, such that $\mu\bb{X\setminus S_1B_1}<\frac 1{200}$.  Let $\bset{p_n}_{n=1}^\infty$ be an enumeration of all polynomials over $\C^d$ with complex-rational coefficients. Note that not only this set is dense in the set of holomorphic functions over $\C^d$, if we remove a finite number of elements, it remains a dense set.

We will set $X_n:=S_nB_n$, and define the sequence of functions $F_n$  as a linear combination of step functions,
$$
F_n\bb{T_{\vect z}x}=F_n\bb{\vect z,x}=\sumit j1{k_n}F_n^j\bb{\vect z}\cdot\indic{B_n^j}(x),
$$
where $\bset{F_n^j}_{j=1}^{k_n}$ are holomorphic, and $\bset{B_n^j}_{j=1}^{k_{n-1}}$ is a measurable partition of $B_n$, denoted $\mathcal P_n$. Note that $F_n$ is well defined, since $B_n$ is an $S_n$-set and therefore the mapping $T_{\vect z}x\mapsto \bb{\vect z,x}$ is well defined. Each such function will satisfy:
\begin{align*}
\tag {Condition (D)}\label{cond:dense}&\text{ For every }1\le k\le n\text{ there exists a translation }\vect w_k^{j,n}(S)\in\C^d \text{ so that}\\
&\underset{\vect z\in Q_k^{-\sumit \ell {k+1}n2^{-\ell}}}\sup\abs{F_n^j\bb{\vect w_k^{j,n}+\vect z}-p_k\bb{\vect z}}<\sumit \ell {k+1}n2^{-\ell},\text{ where } Q_k \text{ is a }\frac1{2^{2d}} \text{ sub-cube of }S_k,
\end{align*}
where given a set $U$ and $\eps>0$ we define
\begin{align*}
&U^{+\eps}:=\bset{\vect z\in \C^d, dist(\vect z,U)<\eps}\\
&U^{-\eps}:=\bset{\vect z\in U, dist(\vect z,\C\setminus U)>\eps}.
\end{align*}
Intuitively $U^{+\eps}$ is a 'fattening' of $U$ by $\eps$ and $U^{-\eps}$ is 'shrinking' $U$ by $\eps$.

Formally, we will construct this sequence inductively.

 Define $F_1:X_1\rightarrow\C$ by
$$
F_1\bb{T_{\vect z}x}=F_1\bb{x,\vect z}=p_1\bb{\vect z}.
$$
$F_1$ is measurable, since it is constant with respect to one variable, and continuous with respect to the other, and it satisfies \ref{cond:dense}.

Assume that $F_{n-1}: X_{n-1}\rightarrow\C$ was defined as
$$
F_{n-1}\bb{T_{\vect z}x}=F_{n-1}\bb{x,\vect z}=\sumit j 1 {k_{n-1}}F_{n-1}^j\bb{\vect z}\indic{B_{n-1}^j}(x),
$$
satisfied \ref{cond:dense}, and
\begin{align}\label{eq:consistent}
\mu\bb{x\in X_{n-2}, \abs{F_{n-1}(x)-F_{n-2}(x)}>2^{-(n-1)}}<2^{-(n-1)}.
\end{align}

Since $F_{n-1}^j$ is holomorphic in $\C^d$ for every $j$ fixed, it is uniformly continuous on $S_{n-1}^{+1}=\left[-1,a_{n-1}+1\right)^{2d}$, and therefore there exists $\delta_n\in(0,1)$ such that for every $1\le j\le k_{n-1}$:
$$
\underset{\vect z,\vect w\in S_{n-1}^{+1}\atop\abs{\vect z-\vect w}<\delta_n}\sup\; \abs{F_{n-1}^j\bb{\vect z}-F_{n-1}^j\bb{\vect w}}<\frac{10^{-2n}}2.
$$
Let $\mathcal P_n$ be a partition of $B_n$ which is $\delta_n$-fine and consistent with $\mathcal P_{n-1}=\bset{B_{n-1}^\ell}_{\ell=1}^{k_{n-1}}$, the partition of $B_{n-1}$ used to define $F_{n-1}$. Such a partition exists by Lemma \ref{lem:Partition}. \\
For every $j$ we use the axiom of choice to choose a representative $x_n^j\in B_n^j$. We associate every point $x_n^j$ with a collection of cubes. First, we note that the Nested Tower Lemma naturally relates for every $x\in B_n$ a collection of copies of $S_{n-1}$ inside $S_n$ defined by
$$
\Lambda_n^j:=\bset{\lambda\in S_n, T_\lambda x_n^j\in B_{n-1}}.
$$
Since $B_n$ is an $S_n$-set, the collection of cubes $\bset{\lambda+S_{n-1}}_{\lambda\in \Lambda_n^j}$ is pairwise disjoint. We will choose the sequence $\bset{a_n}$ so that $\frac{a_n}{a_{n-1}}\in\N$. Before applying Proposition \ref{prop:poly_conv} to this collection, we want to 'clear out' one cube in the center of each $\frac1{2^{2d}}$ sub-cube of $S_n$ to make space for the polynomial $p_n$. Assume, for example, that $a_n=2^{m_n}$ for some sequence $m_n\in\N$ satisfying $m_n\nearrow\infty$. Then $\frac{a_n}{a_{n-1}}=2^{m_n-m_{n-1}}$ and in the center of each of the $\frac1{2^{2d}}$ sub-cubes there exists a copy of $S_{n-1}$ (indeed the choice is not unique, there are $2^{2d}$ options). Denote the centers of the chosen cubes by $\vect w_k^n, 1\le k\le 2^{2d}$. Note that this choice does not depend on the point $x_n^j$ only on the ratio $\frac{a_n}{a_{n-1}}$. In addition, it is important to choose one sub-cube in each $\frac1{2^{2d}}$ of the sub-cubes of $S_n$ since Proposition \ref{prop:poly_conv} only guarantees one of the $\frac1{2^{2d}}$ sub-cubes will be almost fully included in the almost polynomially convex set, $U$.

Define the collection of cubes
$$
\mathcal P_n^j:=\bset{\vect w_k^n+S_{n-1}}_{k=1}^{2^{2d}}\cup\bset{\lambda+S_{n-1}, \bb{\lambda+S_{n-1}}\cap\bb{\bunion k 1 {2^{2d}} \bb{\vect w_k^n+S_{n-1}}}=\emptyset, \lambda\in \Lambda_n^j}.
$$
This collection is a finite collection of disjoint cubes all contained in $S_n$. We can therefore apply Proposition \ref{prop:poly_conv} to obtain a polynomially convex set, $U_n^j\subset S_n$, satisfying that
\begin{enumerate}
\item For every $1\le k\le 2^{2d}$, $\vect w_k^n+S_{n-1}^{-2^{-n}}\subset U_n^j$, where $S_{n-1}^{-2^{-n}}=\left[2^{-n},a_{n-1}-2^{-n}\right)^{2d}$, as $\vect w_k^n+S_{n-1}$ are grid cubes.
\item For every $Q\in\mathcal P_n^j$ there exists a $\frac1{2^{2d}}$ sub-cube, $B_Q$ satisfying $B_Q^{-2^{-n}}\subset U_n^j$.
\item $\frac{m_{2d}\bb{\bb{\underset{Q\in \mathcal P_n^j}\bigcup Q}\setminus U_n^j}}{m_{2d}(S_{n-1})}<2^{-n}$.
\end{enumerate}
Define the map $j_n:B_n\rightarrow\N$ by $j_n(x)=j$ if $x\in B_n^j$, and the function
$$
G_n^j(\vect z)= \begin{cases}
				p_n\bb{\vect z-\vect w_k^n}&, \vect z\in\vect w_k^n+S_{n-1}\\
				F_{n-1}^{j_{n-1}\bb{T_\lambda x_n^j}}\bb{\vect z-\lambda}&, \vect z\in\lambda+S_{n-1}
			\end{cases}.
$$
We use {\bf Oka-Weil Approximation Theorem} to approximate $G_n^j$ on the polynomially convex set $U_n^j$ with error $2^{-n}$ and denote the approximation $F_n^j$. Since $U_n^j$ contains $\frac1{2^{2d}}$ sub-cubes of $S_{n-1}$, excluding a- $2^{-n}$  strip along its boundary, each of them contain $\frac1{2^{2d}}$ sub-cubes of $S_{n-2}$ excluding a $2^{-(n-1)}$ strip along its boundary and so on. By induction, $U_n^j$ contains copies of $\frac1{2^{2d}}$ sub-cubes of $S_k$ excluding a $\sumit m k n 2^{-m}$ strips on their boundaries. In particular $F_n^j$ satisfies \ref{cond:dense}.

We define $F_n:X_n\rightarrow\C$ by
$$
F_{n}\bb{T_{\vect z}x}=F_{n}\bb{x,\vect z}=\sumit j 1 {k_{n}}F_{n}^j\bb{\vect z}\indic{B_{n}^j}(x).
$$
Finally, let $E_n$ be the set where either $F_{n-1}$ was not defined or $F_n$ does not approximate $F_{n-1}$ well, namely
$$
E_n=X\setminus X_{n-1}\cup\bset{x\in X_{n-1}, \abs{F_n(x)-F_{n-1}(x)}>2^{-n}}.
$$
Then, following Remark \ref{rmk:JonsObservation}
\begin{align*}
\mu\bb{E_n}&\le \mu\bb{\bunion j 1 {k_n}\bb{\bb{\underset{Q\in \mathcal P_n^j}\bigcup Q\setminus U_n^j} B_n^j}}+O\bb{\frac{a_{n-1}}{a_n}}+\mu\bb{X\setminus X_{n-1}}\\
&=\sumit j 1 {k_n}\frac{m_{2d}\bb{\bb{\underset{Q\in \mathcal P_n^j}\bigcup Q}\setminus U_n^j}}{m_{2d}\bb{\underset{Q\in \mathcal P_n^j}\bigcup Q}}\cdot \mu\bb{\bb{\underset{Q\in \mathcal P_n^j}\bigcup Q}B_n^j}+O\bb{\frac{a_{n-1}}{a_n}}+\mu\bb{X\setminus X_{n-1}}\\
&\le 2^{-n}\mu\bb{S_nB_n}+O\bb{\frac{a_{n-1}}{a_n}}+\mu\bb{X\setminus X_{n-1}}<2^{-n}
\end{align*}
where $O\bb{\frac{a_{n-1}}{a_n}}$ comes from the fact that we replace up to $\bb{2^{2d}}^2$ elements in the sets $\Lambda_n^j$ to create the collection $\bset{\vect w_k^n+S_{n-1}}$. We conclude that \eqref{eq:consistent} holds.

Lastly, let $F=\limit n\infty F_n$, then using Borel-Cantelli's Lemma, 
$$
\mu\bb{\bset{x\in X \text{ s.t. the orbit of }F_x\atop \text{does not have a dense orbit}} }\le \mu\bb{\bintersect n 1 \infty \bunion m n\infty E_m}=0,
$$
since $\sumit m 1 \infty \mu\bb{E_m}\le \sumit m 1 \infty 2^{-m}<\infty,$ concluding the proof of Theorem \ref{thm:general}.
\end{proof}

\bigskip{\footnotesize%
  \bigskip
  (B.~Weiss) \textsc{Einstein Institute of Mathematics, Hebrew University of Jerusalem, Jerusalem ,Israel} \par
  \textit{E-mail address}: \texttt{weiss@math.huji.ac.il}
  
  \bigskip
  (A.~Gl\"ucksam) \textsc{Einstein Institute of Mathematics, Hebrew University of Jerusalem, Jerusalem ,Israel} \par
  \textit{E-mail address}: \texttt{adi.glucksam@math.huji.ac.il}
}

\end{document}